\theoremstyle{plain}
\newtheorem{prop}{Proposition}[section]
\newtheorem{lem}{Lemma}[section]
\newtheorem{thm}{Theorem}[section]
\newtheorem{cor}{Corollary}[section]
\newtheorem{remark}{Remark}[section]
\numberwithin{equation}{section}
\newcommand{\vare}{\varepsilon}
\title{Limiting distributions of generalized money exchange models}
\author{
Hironobu Sakagawa
\thanks{
Department of Mathematics, 
Faculty of Science and Technology, Keio University, 
3-14-1 Hiyoshi, Kouhoku-ku, Yokohama 223-8522, JAPAN. 
{\it E-mail address\/}: sakagawa{\char'100}math.keio.ac.jp} 
}
\begin{document}

\maketitle


\begin{abstract}
The ``Money Exchange Model" is a type of agent-based simulation model 
used to study how wealth distribution and inequality evolve 
through monetary exchanges between individuals. 
The primary focus of this model is to identify the limiting 
wealth distributions that emerge at the macroscopic level, 
given the microscopic rules governing the exchanges among agents. 
In this paper, we formulate generalized versions of 
the immediate exchange model and the uniform saving model 
both of which are types of money exchange models, 
as discrete-time interacting particle systems 
and characterize their stationary distributions. 
Furthermore, we prove that under appropriate scaling, 
the asymptotic wealth distribution converges to a Gamma distribution 
or an exponential distribution for both models. 
The limiting distribution depends on the weight 
function that affects the probability distribution of 
the number of coins exchanged by each agent. 
In particular, our results provide a mathematically rigorous formulation 
and generalization of the assertions previously predicted in studies 
based on numerical simulations and heuristic arguments.

\end{abstract}

\begin{center}{\small\bfseries Key words.}
{\small Econophysics, interacting particle system, 
stationary distribution, equivalence of ensembles, 
local limit theorem.} 
\end{center}

2020 Mathematics Subject Classification: 60K35, 60F99, 91B80. 


\section{Introduction}
\subsection{Money exchange models}
The ``Money Exchange Model" is a type of agent-based simulation model used to study how wealth distribution and inequality evolve through monetary exchanges between individuals. This model has been extensively studied in the field of econophysics, particularly by applying ideas from statistical physics, 
where the money exchange 
is viewed as analogous to the transfer of energy or particles.

Consider an economy consisting of a finite number of agents. The typical process proceeds as follows:
\begin{enumerate}[(1)]
\item 
Initial conditions: Assign each agent a random or equal amount of money.
\item
Selecting pairs of agents: Randomly select pairs of agents who will 
exchange money.
\item Money exchange: 
The selected pair exchanges money according to specific rules.
\item
Iteration: Repeat (2) and (3) multiple times and observe how the distribution of money in the system changes. 
\end{enumerate}
\noindent
The primary focus of this model is to identify the limiting wealth distributions that emerge at the macroscopic level, given the microscopic rules governing the exchanges among agents. Depending on the rules of exchange, the limiting distribution is often expected to take the form of a Gamma or an exponential distribution. In the field of econophysics, various studies have been conducted through numerical simulations and other methods 
(cf. \cite{CCC}, \cite{YR09} and references theirin). 
However, mathematically rigorous studies of these models remain relatively few.

We begin by describing several specific models. 
The first one is the {\em immediate exchange model} 
proposed in \cite{HP14}. 
In this model, the wealth of each agent is 
represented by a real-valued variable. 
At each time step, 
two agents are randomly chosen and give a random fraction of 
their wealth to each other. 
The fraction is determined by independent uniformly 
distributed random variables on the interval $[0, 1]$. 
\cite{HP14} studied the model through numerical simulations, 
and later, \cite{K15} analytically explored the infinite population version. 
As a more realistic microscopic model that includes spatial structure 
in the form of local interactions, 
\cite{LR18} formulated the corresponding discrete version 
as an interacting particle system.
We briefly explain their model and result. 
Consider a finite connected graph 
$\mathcal{G}=(\mathcal{V}, \mathcal{E})$. 
Each site $x\in \mathcal{V}$ corresponds to an agent, 
and the economy is represented by the set of agents $\mathcal{V}$. 
The population size is given by $|\mathcal{V}|=N$.
The amount of money each agent holds is represented by 
the number of coins, where 
$M_n(x)$ represents the number of coins that agent 
$x \in \mathcal{V}$ possesses at time $n$. 
The edge set $\mathcal{E}$ represents a social network in which 
only agents connected by an edge can interact to exchange coins. 
At each time step, an edge $e=\{x, y\}\in \mathcal{E}$ 
is chosen uniformly at random from $\mathcal{E}$. 
Agents $x$ and $y$ independently and uniformly select 
a random number of their coins to give to each other. 
Thus, $\{(M_n(x))_{x\in \mathcal{V}}\}_{n\geq 0}$ constitutes 
a time-homogeneous Markov chain.  
In particular, the total number of coins is conserved 
in this process, and we denote this total by $L$. 
\cite{LR18} proved that the stationary distribution 
$\mu_{N, L}$ 
for this Markov chain uniquely exists and 
gave the following explicit representation. 
\begin{align}\label{lr1}
\begin{split}
\lim_{n\to \infty}P(M_n(x) = c) 
& = \mu_{N, L}\bigl( \{\xi\in \mathbb{Z}_+^{\mathcal{V}}; 
\xi(x)= c \}\bigr) \\
& = 
\frac{(c+1) 
\Bigl(
\begin{array}{c}
L-c + 2N -3  \\
2N-3
\end{array}
\Bigr)}
{\Bigl(
\begin{array}{c}
L+ 2N-1  \\
2N-1
\end{array}
\Bigr)}, 
\end{split}
\end{align}
for every 
$x\in \mathcal{V}$ 
and $c\in \{0,1, 2, \cdots, L\}$. 
The first equality is a consequence of the Markov chain 
convergence theorem. 
Then, by applying formal calculations to the 
right-hand side, the authors obtained the following approximation. 
\begin{equation}\label{app}
\lim_{n\to \infty}P(M_n(x) = c) 
\approx \frac{4c}{T^2}e^{-\frac{2c}{T}}, 
\end{equation}
for large enough $N$ and $T=\frac{L}{N}$. 
In this context 
$T$ represents the average number of coins per agent and is 
called {\em money temperature} 
by analogy with the temperature in physics and 
the limit $N\to\infty$ and $T=\frac{L}{N}\to \infty$ 
are called  {\em large population and 
large money temperature limit}. 
The right-hand side of (\ref{app}) is a probability density 
function of the Gamma distribution with mean $T$ and shape parameter two, 
and this is consistent with the predicted result by 
\cite{HP14} and \cite{K15}. 
As related results, 
\cite{CRR14} and \cite{GRS16} examined the duality 
between the real-valued model and the discrete state version 
in a continuous time setting. 
In addition, \cite{GRS16} considered the case where 
the exchange fraction is determined by a Beta distribution.

We also present two models with different exchange rules 
that we address in this paper. 

\smallskip

\noindent 
{{{\em Uniform saving model:}}}\ 
Agents $x$ and $y$ independently save a random number of 
their coins according to a uniform distribution. 
The remaining coins are then pooled and uniformly redistributed 
between the two agents. 
As for the immediate exchange model, 
the limiting distribution for this model is predicted 
to be a Gamma distribution 
with shape parameter two (cf. \cite{CC00}, \cite{PCK04}), and 
\cite{LR18} obtained the same approximation as (\ref{app}). 

\smallskip

\noindent 
{{{\em Uniform reshuffling model:}}}\ 
All the coins agents $x$ and $y$ possess are pooled 
and uniformly redistributed between the two agents. 
Different from the above two models, 
the limiting distribution for this model 
is predicted to be an exponential distribution with mean $T$ 
(cf. \cite{DY00}). 
\cite{LR18} obtained the corresponding approximation 
of the same form as (\ref{app}). 

\smallskip


So far, \cite{LR18} has formulated several money exchange models 
as Markov chains with spatial structure, 
characterized their stationary distributions and further obtained 
their formal approximations. 
The conclusions explain the distribution of wealth in a sense, 
as predicted by numerical simulations and other methods. 
However, the approximation $`` \approx "$ in (\ref{app}) 
is not mathematically valid. 
The left-hand side is defined only for 
non-negative integers $c$, 
while the right-hand represents a probability density function 
on $\mathbb{R}_+$. 
The derivation of the approximation (\ref{app}), 
although merely a formal calculation, 
seems to depend on assumptions that are not fully clarified.
Specifically, in the proofs of Theorems 1, 2 and 3 in \cite{LR18}, 
$L+1, L+2, \cdots, L+N$ are replaced with $L$ 
for sufficiently large $L$ and $N$ satisfying $N \ll L$. 
On the other hand, $L-c+1, L-c+2, \cdots, L-c+N$ are 
replaced with $L-c$, rather than $L$, even when $c\ll L$. 
Additionally, $c+1$ has been conveniently replaced with $c$.
As a matter of fact, 
it seems unnatural to consider the approximation of 
$\mu_{N, L}( \{\xi\in \mathbb{Z}_+^{\mathcal{V}}; 
\xi(x)= c \} )$ for each $c$ 
since the average number of coins per agent diverges 
in the limit $T=\frac{L}{N} \to \infty$. 
The money exchange model describes the microscopic movement of money, 
however, 
our goal is to derive the macroscopic distribution of wealth 
in the limit $N\to \infty$ and $\frac{L}{N}\to \infty$. 
In order to do that, 
we should analyze the convergence of $\mu_{N, L}$ 
under appropriate scaling. 
Therefore, the primary objective of this paper is to provide 
a mathematically rigorous justification of the approximation 
(\ref{app}) and to clearly demonstrate the convergence 
of the wealth distribution.
Furthermore, 
we generalize the rules of money exchange as follows: 
\begin{itemize}
\item
Randomly select the number of coins to pass or save 
based on a probability distribution that depends on the number of coins, 
rather than using the uniform distribution. 
\item
Allow the exchange or redistribution of coins among randomly 
selected groups of three or more agents.
\end{itemize}
These generalizations appear natural 
from both mathematical and economic perspectives. 
We note that 
as a generalization of the exchange rules in the immediate exchange model, 
\cite{RS17} considered a broader class of models 
where mass is split, exchanged and merged. 
In \cite{L17}, \cite{LR19} and \cite{LR24}, 
the authors formulated other money exchange models 
as Markov chains and 
studied their stationary distributions and 
formal approximations in a manner similar to 
(\ref{lr1}) and (\ref{app}). 
Also, the mixing time has been studied recently for 
the binomial splitting model and the symmetric beta-binomial splitting model, 
which are variations of the uniform reshuffling model 
(cf. \cite{PR23}, \cite{QS23}). 



Before introducing our models and results we prepare several notations. 
In the following, $\mathbb{Z}_+=\{0, 1, 2, \cdots \}$ denotes the set of 
non-negative integers and 
$\mathbb{N}=\{1, 2, 3, \cdots \}$ denotes the set of positive integers. 
$[a]$ denotes the integral part of $a>0$ and 
we set 
$a \vee b:= \max\{a, b\}$ and 
$a \wedge b:= \min\{a, b\}$ for $a, b \in \mathbb{R}$. 
For a finite set $A$, $|A|$ denotes its cardinality. 
For two sequences of positive numbers $\{a_n\}$ and $\{b_n\}$, 
$a_n\sim b_n$ means that $\lim\limits_{n\to \infty} 
\frac{a_n}{b_n}=1$. 
A function $f$ on $\mathbb{R}^{\mathbb{Z}}$ is 
called local if it depends only on finitely many coordinates. 
For a probability measure $\mu$, $E^\mu[\, \cdot\, ]$ denotes 
the expectation with respect to $\mu$. 

\subsection{Model description and results}

Let us state our model precisely. 
We adopt the standard notations commonly used 
in the study of interacting particle systems.
For $A \subset \mathbb{Z}$ and $L \in \mathbb{N}$, we define 
the configuration space 
\begin{equation*}
\Omega(A, L)=\bigl\{
\eta = \{\eta(x)\}_{x\in A} \in \mathbb{Z}_+^A; 
\sum\limits_{x\in A}\eta(x) =L
\bigr\}.
\end{equation*}
When $A=\Lambda_N:= \{1, 2, \cdots, N\}$, 
$\Omega(A, L)$ is denoted by $\Omega_N(L)$. 
Consider now an economy populated by many agents. 
Each site $x\in \Lambda_N$ corresponds to an agent and 
we assume that the economy can be represented 
by the set of agents $\Lambda_N$. 
$N$ corresponds to the population size. 
For each $\eta \in \Omega_N(L)$, we interpret
$\eta(x)$, $x\in \Lambda_N$ 
not as the number of particles, 
but as the number of coins held by agent $x$. 
$\rho$ denotes a probability distribution on 
$\mathcal{D}_N= \{ A\subset \Lambda_N; |A|\geq 2\}$, 
namely $\rho(A)\geq 0$ for every $A \subset \Lambda_N$ 
with $|A|\geq 2$ and $\sum\limits_{A\in \mathcal{D}_N}
\rho(A)=1$. 
This represents the distribution 
that determines which agent handles the money exchange 
at each time step. 
We also take a non-negative function 
$g ( \not\equiv 0)$ defined on $\mathbb{Z}_+$. 
Now, we introduce three money exchange models.

\smallskip

\noindent 
{{{\em Immediate exchange model:}}}\ 
Let $\{X_n\}_{n \geq 0}$ be a time-homogeneous Markov chain 
on the state space $\Omega_N(L)$. 
For given $X_n=\xi \in \Omega_N(L)$, the configuration 
$X_{n+1}$ is determined from the following rule. 
\begin{enumerate}[(1)]
\item
Choose a set $A\in \mathcal{D}_N$ according to the distribution 
$\rho$. 
\item
For given $\xi$, 
let $\{c(x)\}_{x\in A}$ be independent 
random variables whose distributions are given by 
\begin{equation}\label{gg}
P(c(x)=k)= \frac{1}{G(\xi(x))}g(k),\ 
k\in \{ 0, 1, 2, \cdots, \xi(x)\}, 
\end{equation}
for $x\in A$ where we set 
$G(k) = \sum\limits_{j=0}^k g(j)$, $k\in \mathbb{Z}_+$. 
\item
Choose a permutation $\sigma \in \mathcal{S}_A$ uniformly random, 
namely with probability 
$\frac{1}{|\mathcal{S}_A|}=\frac{1}{|A|!}$ 
where $\mathcal{S}_A$ denotes the set of all permutations of $A$. 
\item
For given $\xi$ and realizations $A$, $\{c(x)\}_{x\in A}$ and $\sigma$, 
define $X_{n+1}$ by 
\begin{equation*}
X_{n+1}(z) = 
\begin{cases}
\xi(z) -c(z) + c(\sigma^{-1}(z)) & \text{ if } z \in A, \\
\xi(z) & \text{ if } z \notin A. \\
\end{cases}
\end{equation*}
\end{enumerate}
This dynamics can be interpreted as follows: 
At each time step, a money exchange occurs between agents 
in a randomly chosen set $A$. 
$c(x)$ represents the number of coins that 
agent $x$ transfers, which is determined by a 
probability distribution dependent on 
the weight function $g$ and $\xi(x)$, 
the number of coins that agent $x$ possesses. 
According to a randomly chosen permutation $\sigma\in \mathcal{S}_A$, 
each agent $x \in A$ passes $c(x)$ coins to agent $\sigma(x)\in A$. 
In this process, the total number of coins remains conserved. 
It is worth noting that the model studied in \cite{LR18} 
corresponds to the case where $g$ is a constant function 
and $\rho$ is the uniform distribution on an edge set of $\Lambda_N$. 
In this case, the distribution (\ref{gg}) matches 
the uniform distribution on $\{0, 1, 2, \cdots, \xi(x)\}$ and 
the money exchange occurs between two agents connected 
by an edge in $\Lambda_N$. 
To be more precise, since the permutation $\sigma\in \mathcal{S}_A$ 
can include the identity permutation, 
our model can be viewed as the lazy version of their model in this case.


\smallskip

\noindent
{{{\em Random saving model:}}}\ 
Let $\{Y_n\}_{n \geq 0}$ be a time-homogeneous Markov chain 
on the state space $\Omega_N(L)$. 
For given $Y_n=\xi$, the configuration 
$Y_{n+1}$ is determined from the following rule. 
\begin{enumerate}[(1)]
\item
Choose a set $A\in \mathcal{D}_N$ according to the distribution 
$\rho$. 
\item
For given $\xi$, let $\{c(x)\}_{x\in A}$ be independent 
random variables whose distributions are given by 
\begin{equation*}
P(c(x)=k)= \frac{1}{G(\xi(x))}g(k),\ 
k\in \{ 0, 1, 2, \cdots, \xi(x)\}, 
\end{equation*}
for $x\in A$. 
\item
For given $\xi$ and $c=\{c(x)\}_{x\in A}$, 
choose a configuration $d =\{d(x)\}_{x\in A} 
\in \Omega(A, S_A(\xi)-S_A(c))$ uniformly random, 
namely with probability 
$\frac{1}{|\Omega(A, S_A(\xi)-S_A(c))|}$ 
where we set 
$ S_A(\xi)= \sum\limits_{x\in A}\xi(x)$ 
for $\{\xi(x)\}_{x\in A}$. 
\item
For given $\xi$ and realizations $A$, $\{c(x)\}_{x\in A}$ and 
$\{d(x)\}_{x\in A}$, 
define $Y_{n+1}$ by 
\begin{equation*}
Y_{n+1}(z) = 
\begin{cases}
c(z) + d(z) & \text{ if } z \in A, \\
\xi(z) & \text{ if } z \notin A. \\
\end{cases}
\end{equation*}
\end{enumerate}
Note that 
in contrast to the immediate exchange model, 
the random variable $c(x)$ represents 
how many coins the agent $x$ to save. 
When a money exchange occurs within set $A$, 
each agent $x\in A$ offers $\xi(x)-c(x)$ coins. 
These coins are then pooled and redistributed among 
the agents in $A$ according to the uniform distribution. 
Similar to the immediate exchange model, 
\cite{LR18} studied the case where $g$ is a constant function and 
$\rho$ is the uniform distribution on an edge set. 
In that model, $c(x)$ is drawn from the uniform distribution 
on $\{0, 1, 2, \cdots, \xi(x)\}$, and 
it is referred to as the {\em uniform saving model}. 
Since our model considers a more general distribution for $c(x)$, 
we refer to it as the {\em random saving model}. 
As a special case, when the weight function $g$ is defined 
by $g(k)=\delta_0(k)$ for $k\in \mathbb{Z}_+$, 
each agent saves no money. 
When a money exchange occurs within set $A$, 
all coins held by the agents in $A$ are pooled and redistributed. 
This model is referred to as the {\em uniform reshuffling model}, 
and we denote it by $\{Z_n\}_{n\geq 0}$. 

As the first result, we characterize the stationary distributions 
of these Markov chains. 
\begin{prop}\label{prop1}
Let $N, L \in \mathbb{N}$ be fixed. 
We assume that the hypergraph $(\Lambda_N, \mathcal{D}_{N, \rho})$ 
is connected where the hyperedge set 
$\mathcal{D}_{N, \rho}$ is defined by 
$\mathcal{D}_{N, \rho} = \{A\subset \Lambda_N; |A|\geq2, 
\rho(A)>0\}$.
\begin{enumerate}[$(i)$]
\item
Assume that the weight function $g:\mathbb{Z}_+ \to [0, \infty)$ 
satisfies $g(0)>0$ and $g(1)>0$. 
Then, there is a unique stationary distribution $\mu_{N, L}$ 
for $\{X_n\}_{n \geq 0}$ and it is given by 
\begin{equation}\label{stat}
\mu_{N, L}(\xi) = \frac{1}{Z_{N, L}}
\prod_{x\in \Lambda_N}\!\! G(\xi(x)), \ 
\xi \in \Omega_N(L), 
\end{equation}
where 
$G(k) = \sum\limits_{j=0}^k g(j),\ k \in \mathbb{Z}_+$ 
and 
$Z_{N, L}= \sum\limits_{\xi \in \Omega_N(L)}
\prod\limits_{x\in \Lambda_N} G(\xi(x))$ is the normalization factor. 
In particular, for every $\eta \in \Omega_N(L)$, 
$x\in \Lambda_N$ 
and $k \in \{0, 1, 2, \cdots, L\}$, 
it holds that 
\begin{align*}
\lim_{n \to \infty} P_{\eta}\bigl( X_n(x) = k \bigr) 
= \mu_{N, L}\bigl( \xi(x) =k \bigr) = 
\frac{Z_{N-1, L-k}\, G(k)}{Z_{N, L}}, 
\end{align*}
where $P_\eta$ represents the law of $\{X_n\}_{n\geq 0}$ 
with the initial condition $X_0=\eta$. 

\item
Assume that the weight function $g:\mathbb{Z}_+ \to [0, \infty)$ 
satisfies $g(0)>0$. 
Then, the exact same statement as $(i)$ applies to 
$\{Y_n\}_{n \geq 0}$ instead of $\{X_n\}_{n \geq 0}$. 
In particular, for the uniform reshuffling model $\{Z_n\}_{n\geq 0}$, 
there is a unique stationary distribution $\pi_{N, L}$ 
and it is given by the uniform distribution on $\Omega_N(L)$, 
namely, 
\begin{equation*}
\pi_{N, L}(\xi) = \frac{1}{|\Omega_{N}(L)|}, \ 
\xi \in \Omega_N(L). 
\end{equation*}
\end{enumerate}
\end{prop}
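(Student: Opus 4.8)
The plan is to obtain existence, uniqueness and the explicit form in one stroke: first prove that each chain is irreducible and aperiodic on the finite set $\Omega_N(L)$, so that a unique stationary distribution exists and the asserted convergence holds by the ergodic theorem for finite Markov chains, and then verify that the measure in (\ref{stat}) satisfies detailed balance, which identifies it as that stationary distribution.

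For irreducibility I would use the elementary move ``transfer a single coin from an agent $x$ to an agent $y$'' whenever $\{x,y\}\subset A$ for some $A\in\mathcal{D}_{N,\rho}$. For $\{X_n\}_{n\ge0}$ one chooses such an $A$, sets $c(x)=1$ and $c(z)=0$ for $z\in A\setminus\{x\}$, and takes $\sigma$ to be the transposition of $x$ and $y$; this requires $\xi(x)\ge1$ and has positive probability precisely because $g(0)>0$ and $g(1)>0$, which is where the hypothesis of $(i)$ enters. For $\{Y_n\}_{n\ge0}$ one sets $c\equiv0$ on $A$ (legitimate as soon as $g(0)>0$) and lets the uniform redistribution $d$ equal the target restricted to $A$; this needs only $g(0)>0$, explaining the weaker hypothesis of $(ii)$. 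Since the hypergraph $(\Lambda_N,\mathcal{D}_{N,\rho})$ is connected, iterating such single-coin transfers along a path of hyperedges connects the ``all coins on one agent'' configuration to every element of $\Omega_N(L)$ and back, so the state space is a single communicating class; aperiodicity follows from $p(\xi,\xi)>0$, which holds by taking $c\equiv0$ (respectively $\sigma=\mathrm{id}$).

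To identify the stationary law I would verify that $\mu_{N,L}$ of (\ref{stat}) is reversible. Fix a transition $\xi\to\xi'$ realized through a hyperedge $A$. For the immediate exchange model a realization is a pair $(\{c(x)\}_{x\in A},\sigma)$ with $0\le c(x)\le\xi(x)$ and $\xi'(z)=\xi(z)-c(z)+c(\sigma^{-1}(z))$; to it I associate $(\{\tilde c(x)\}_{x\in A},\sigma^{-1})$ with $\tilde c(x)=c(\sigma^{-1}(x))$. A short check shows this datum realizes the reverse transition $\xi'\to\xi$ through $A$, that feasibility is preserved (the constraint $\tilde c(x)\le\xi'(x)$ is equivalent to $c(x)\le\xi(x)$), and that the assignment is an involution, hence a bijection between the realizations of $\xi\to\xi'$ and those of $\xi'\to\xi$ supported on $A$. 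Weighting by $\mu_{N,L}$, the probability of the forward realization is
\[
\frac{\rho(A)}{Z_{N,L}\,|A|!}\Bigl(\prod_{x\notin A}G(\xi(x))\Bigr)\prod_{x\in A}g(c(x)),
\]
and the corresponding quantity for the paired realization of $\xi'\to\xi$ is the same expression with $\xi$ replaced by $\xi'$ and $c$ by $\tilde c$; the two coincide because $\xi'(x)=\xi(x)$ for $x\notin A$ and $\prod_{x\in A}g(\tilde c(x))=\prod_{x\in A}g(c(x))$ after reindexing by $\sigma$. Summing over all $A$ with $\rho(A)>0$ yields $\mu_{N,L}(\xi)p(\xi,\xi')=\mu_{N,L}(\xi')p(\xi',\xi)$. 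The random saving model is handled the same way: the reverse realization of $\xi'\to\xi$ keeps the saved amounts $\{c(x)\}_{x\in A}$ and uses the redistribution $\tilde d(z)=\xi(z)-c(z)$; both directions require $0\le c(z)\le\xi(z)\wedge\xi'(z)$, the conservation $S_A(\xi')=S_A(\xi)$ makes the uniform normalizations $|\Omega(A,S_A(\xi)-S_A(c))|$ agree, and the $g$-weights match as before. With reversibility established, the marginal formula follows by summing (\ref{stat}) over $\{\xi\in\Omega_N(L):\xi(x)=k\}$: one factors out $G(k)$ and is left with the sum of $\prod_{y\ne x}G(\xi(y))$ over configurations on $\Lambda_N\setminus\{x\}$ of total $L-k$, i.e.\ $Z_{N-1,L-k}$. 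Finally, the uniform reshuffling model is the case $g=\delta_0$, where $G\equiv1$ makes $\mu_{N,L}$ uniform on $\Omega_N(L)$, and $g(0)=1>0$ so part $(ii)$ applies.

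The step I expect to demand the most care is the bijection in the reversibility argument for the immediate exchange model when $|A|\ge3$ and $\sigma$ is a nontrivial permutation: one must confirm that the reversed transfer amounts $\tilde c(x)=c(\sigma^{-1}(x))$ remain feasible for $\xi'$, that the involution really is bijective on the relevant set of realizations so nothing is over- or undercounted, and that the term-by-term matching is compatible with summing over the (possibly many) hyperedges $A$ realizing a given pair $(\xi,\xi')$. Everything else is routine bookkeeping with finite Markov chains and the product structure of (\ref{stat}).
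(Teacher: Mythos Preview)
Your proposal is correct. The irreducibility/aperiodicity argument and the detailed-balance check for the random saving model are essentially identical to the paper's. For the immediate exchange model, however, you take a genuinely different (and arguably cleaner) route.

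The paper does not produce a bijection. Instead it expands $\prod_{x}G(\xi(x))\cdot P(X_{n+1}=\eta\mid X_n=\xi)$ explicitly, decomposes each $\sigma\in\mathcal{S}_A$ into cycles, and proves a combinatorial lemma: for a cyclic permutation the quantity $h_A(\sigma)+h_A(\sigma^{-1})$ is symmetric in $\xi$ and $\eta$, via a change of variables amounting to $S_-(a,b)=S_+(b,a)$. Your involution $(\{c(x)\},\sigma)\mapsto(\{c(\sigma^{-1}(x))\},\sigma^{-1})$ is precisely the map underlying that identity, but applied globally rather than cycle by cycle; this lets you skip the cycle decomposition entirely and matches the weighted terms one realization at a time. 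What the paper's approach buys is an explicit closed-form expression for the symmetric quantity (useful if one wants the transition kernel in hand); what your approach buys is brevity and a transparent reason \emph{why} the products $\prod_{x\in A}g(c(x))$ agree. The feasibility check you flag as delicate, $\tilde c(x)\le\xi'(x)\Leftrightarrow c(x)\le\xi(x)$, is exactly right and is the one nontrivial point; once that is verified the involution is clearly bijective for each fixed $A$, and summing over $A$ is harmless because your pairing preserves $A$.
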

\noindent 
By this proposition we can see that both the immediate exchange model 
and the random saving model have the same stationary distribution 
under the condition $g(0)>0$ and $g(1)>0$.  
Also, the choice of $\rho$ does not affect the stationary distribution. 

\begin{remark}
The condition $g(0)>0$ is always needed to make the 
measure (\ref{stat}) well-defined. 
\end{remark}
\begin{remark}
Consider the case where the weight function $g$ is a constant function 
$g \equiv \gamma>0$. 
Then, it holds that 
$G(k)= \gamma(k+1)$, $k\in \mathbb{Z}_+$ and 
the constant $\gamma$ is canceled by the normalization 
factor in the definition of $\mu_{N, L}$. 
Therefore, we can take $G$ as 
$G(k)= k+1$, $k\in \mathbb{Z}_+$ in (\ref{stat}) 
and in this case, the above result matches that 
of \cite{LR18}. 
\cite{LR18} also gave the explicit representation for 
$Z_{N, L}$ when $g\equiv 1$ and obtained (\ref{lr1}). 
\end{remark}
\begin{remark}
For the random saving model 
we can change the role of random variable $c(x)$ 
to represent the number of coins to offer 
instead of the number of coins to save. 
Namely, we modify $(3)$ and $(4)$ in the definition of the 
random saving model as follows: 
\begin{enumerate}
\item[$(3)'$]
For given $\xi$ and $c=\{c(x)\}_{x\in A}$, 
choose a configuration $d =\{d(x)\}_{x\in A} 
\in \Omega(A, S_A(c))$ uniformly random, 
namely with probability 
$\frac{1}{|\Omega(A, S_A(c))|}$. 
\item[$(4)'$]
For given $\xi$ and realizations $A$, $\{c(x)\}_{x\in A}$ and 
$\{d(x)\}_{x\in A}$, define $Y_{n+1}$ by 
\begin{equation*}
Y_{n+1}(z) = 
\begin{cases}
\xi(z)-c(z) + d(z) & \text{ if } z \in A, \\
\xi(z) & \text{ if } z \notin A. \\
\end{cases}
\end{equation*}
\end{enumerate}
By symmetry, the completely same proof for Proposition \ref{prop1} 
below works well in this setting and the same result 
holds for this modified model. 
\end{remark}

Now, we are in the position to state the main result of this paper. 
To justify the limit 
$N\to \infty$ and $\frac{L}{N}\to \infty$, 
we assume that the total number of coins 
$L=L_N$ satisfies 
$\lim\limits_{N\to\infty}\frac{L_N}{Na_N}=T$ 
for some $T>0$ and divergent sequence 
$\{a_N\}_{N\geq 1}$. 
Then, we can prove that the law of the scaled field 
$\bigl\{ \frac{1}{a_N}\eta(x)\bigr\}_{x\in \Lambda_N}$ under 
$\mu_{N, L}$ or $\pi_{N, L}$ converges 
to the i.i.d. product of 
probability distributions on $\mathbb{R}_+$. 
Its marginal distribution depends on the 
asymptotic behavior of the weight function $g$. 
\begin{thm}\label{thm1}
Let $\{L_N \}_{N\geq 1}$ be a sequence of positive integers 
that satisfies 
$\lim\limits_{N\to\infty}\frac{L_N}{N a_N}=T$ 
for some positive constant $T>0$ and a sequence 
$\{a_N\}_{N\geq 1}$ which satisfies 
$\lim\limits_{N\to \infty}a_N=\infty$. 
Assume that $g(0)>0$ and the following condition holds: 
{There exist } $\alpha\in \mathbb{R}$ and 
$c_\alpha \in (0, \infty)$ such that 
$\lim\limits_{k\to \infty}\frac{g(k)}{k^\alpha} =c_\alpha$. 
Then, for 
every bounded continuous local function 
$f:\mathbb{R}^A \to \mathbb{R}$, it holds that 
\begin{align*}
\lim_{N\to \infty}
E^{\mu_{N, L_N}} \Bigl[ f\bigl( 
\frac{\cdot}{a_N} \bigr)\Bigr] 
= 
E^{\overline{\mu}_{\alpha, T}^A}\bigl[f(\, \cdot\, )\bigr], 
\end{align*}
where $A$ is a finite subset of $\mathbb{Z}_+$ and 
$\overline{\mu}_{\alpha, T}^A$ denotes 
the product probability measure on 
$\mathbb{R}_+^A$ whose one site marginal distribution 
on $\mathbb{R}_+$ is given by 
\begin{align}\label{limdist}
\mu_{\alpha, T}(dr) = 
\begin{cases}
\frac{1}{\Gamma(\alpha+2)} \bigl(\frac{\alpha+2}{T}
\bigr)^{\alpha +2} r^{\alpha +1} e^{-\frac{\alpha+2}
{T}r}dr & \text{ if } \alpha>-1, \\
\frac{1}{T} e^{-\frac{1}{T}r}dr 
& \text{ if } \alpha \leq -1. 
\end{cases}
\end{align}
$\Gamma(\beta)= \int_0^\infty r^{\beta-1}e^{-r}dr$ is 
the Gamma function with parameter $\beta>0$. 

Also, if $g: \mathbb{Z}_+\to [0, \infty)$ satisfies 
$g(0)>0$ and $\sum\limits_{j\geq 0} g(j) <\infty$, then 
the same conclusion as in the case $\alpha\leq -1$ above holds.
In particular, for every bounded continuous local function 
$f:\mathbb{R}^A \to \mathbb{R}$, it holds that 
\begin{align*}
\lim_{N\to \infty}
E^{\pi_{N, L_N }} \Bigl[ f\bigl( 
\frac{\cdot}{a_N} \bigr)\Bigr] 
= 
E^{{\overline{\pi}}_{T}^A}\bigl[f(\, \cdot\, )\bigr], 
\end{align*}
where $A$ is a finite subset of $\mathbb{Z}_+$ and 
$\overline{\pi}_{T}^A$ denotes 
the product probability measure on 
$\mathbb{R}_+^A$ whose one site marginal distribution 
on $\mathbb{R}_+$ is given by 
$\pi_{T}(dr) = 
\frac{1}{T} e^{-\frac{1}{T}r}dr$.  
\end{thm}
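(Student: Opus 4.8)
\emph{Proof strategy.}
The stationary measure $\mu_{N,L}$ of (\ref{stat}) is exactly the canonical measure obtained by conditioning a grand canonical product measure, with single-site weights $G$, on the value of the total number of coins, so the natural route is the classical \emph{equivalence of ensembles} scheme. For $\lambda\in[0,1)$ put $Z(\lambda)=\sum_{k\ge0}G(k)\lambda^{k}$ and let $\nu_{\lambda}$ be the probability measure on $\mathbb{Z}_{+}$ with $\nu_{\lambda}(k)=G(k)\lambda^{k}/Z(\lambda)$. Since $G(k)\ge g(0)>0$ for all $k$ and $G$ grows at most polynomially --- indeed $G(k)\sim\frac{c_{\alpha}}{\alpha+1}k^{\alpha+1}$ if $\alpha>-1$, $G(k)\sim c_{-1}\log k$ if $\alpha=-1$, and $G(k)\to G_{\infty}\in(0,\infty)$ if $\sum_{j}g(j)<\infty$ --- the series $Z$ has radius of convergence $1$ and $\nu_{\lambda}$ has full support on $\mathbb{Z}_{+}$, hence lattice span $1$. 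A direct computation from (\ref{stat}) shows that $\nu_{\lambda}^{\otimes\Lambda_{N}}(\,\cdot\mid\sum_{x\in\Lambda_{N}}\eta(x)=L\,)=\mu_{N,L}$ for every $\lambda\in(0,1)$, and more generally, for any finite $A\subset\Lambda_{N}$ and any function $f$ on $\mathbb{R}^{A}$,
\[
E^{\mu_{N,L}}\!\bigl[f(\eta_{A}/a_{N})\bigr]=E^{\nu_{\lambda}^{\otimes A}}\!\bigl[f(\eta_{A}/a_{N})\,R_{N}(S_{A})\bigr],\qquad R_{N}(j):=\frac{P(S_{\Lambda_{N}\setminus A}=L-j)}{P(S_{\Lambda_{N}}=L)},
\]
where, for $B\subset\Lambda_{N}$, $S_{B}=\sum_{x\in B}\eta(x)$, and all probabilities are taken under $\nu_{\lambda}^{\otimes\Lambda_{N}}$.

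\emph{Choice of fugacity and one-site limit.}
The mean $m(\lambda)=E^{\nu_{\lambda}}[\eta(x)]=\lambda Z'(\lambda)/Z(\lambda)$ is continuous and strictly increasing on $[0,1)$, with $m(0)=0$ and $m(\lambda)\to\infty$ as $\lambda\uparrow1$; hence for $N$ large there is a unique $\lambda_{N}\in(0,1)$ with $m(\lambda_{N})=\rho_{N}:=L_{N}/N$. Since $\rho_{N}\sim Ta_{N}\to\infty$ we have $\lambda_{N}\uparrow1$, and, writing $\lambda_{N}=e^{-s_{N}}$, a Laplace/Tauberian estimate for $Z(\lambda)$ built from the asymptotics of $G$ gives $s_{N}\sim(\alpha+2)/(Ta_{N})$ when $\alpha>-1$ and $s_{N}\sim1/(Ta_{N})$ when $\alpha\le-1$ or $\sum_{j}g(j)<\infty$. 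The same computation shows that, under $\nu_{\lambda_{N}}$, the rescaled variable $\eta(x)/a_{N}$ converges in distribution to $\mu_{\alpha,T}$ (respectively to $\pi_{T}$ in the two exponential cases), and that $\sup_{N}E^{\nu_{\lambda_{N}}}[(\eta(x)/a_{N})^{p}]<\infty$ for every $p\ge1$; in particular $\mathrm{Var}_{\nu_{\lambda_{N}}}(\eta(x))\asymp a_{N}^{2}$ and the rescaled single-site variables are uniformly integrable of all orders.

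\emph{Local limit theorem.}
Let $S_{N}=\sum_{x\in\Lambda_{N}}\eta(x)$ under $\nu_{\lambda_{N}}^{\otimes\Lambda_{N}}$, so that $E[S_{N}]=L_{N}$ and $\sigma_{N}^{2}:=N\,\mathrm{Var}_{\nu_{\lambda_{N}}}(\eta(x))\asymp Na_{N}^{2}$. The analytic heart of the proof is the local limit theorem
\[
\sup_{\ell\in\mathbb{Z}}\Bigl|\,\sigma_{N}\,P(S_{N}=\ell)-\frac{1}{\sqrt{2\pi}}\,e^{-(\ell-L_{N})^{2}/(2\sigma_{N}^{2})}\,\Bigr|\ \longrightarrow\ 0\qquad(N\to\infty),
\]
together with the analogous statement for the sum over $\Lambda_{N}\setminus A$. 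One proves this by the characteristic function method: on a neighbourhood of $t=0$, rescaling $t$ by $\sigma_{N}/\sqrt{N}$ and using a third-moment expansion produces the Gaussian factor, whereas on the rest of $[-\pi,\pi]$ the span-$1$ lattice property gives $|\widehat{\nu_{\lambda_{N}}}(t)|\le1-\delta$ uniformly in $N$, so that the remaining integral is exponentially small. I expect this step to be the main obstacle: because $\nu_{\lambda_{N}}$ itself spreads out as $N\to\infty$, the Edgeworth- and Cram\'er-type estimates have to be made uniform along the triangular array $\{\nu_{\lambda_{N}}\}_{N}$, which requires tracking the low-order moments and the behaviour of $\widehat{\nu_{\lambda_{N}}}$ away from $0$ as $\lambda_{N}\uparrow1$.

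\emph{Conclusion via equivalence of ensembles.}
Fix a finite set $A$ (contained in $\Lambda_{N}$ for all large $N$) and a bounded continuous local $f$ on $\mathbb{R}^{A}$. Split the expectation $E^{\nu_{\lambda_{N}}^{\otimes A}}[f(\eta_{A}/a_{N})\,R_{N}(S_{A})]$ according to whether $S_{A}\le a_{N}\log a_{N}$ or not. On the first event $S_{A}$ and the mean shift $|A|\rho_{N}$ are both $o(\sigma_{N})$, so the local limit theorem gives $R_{N}(S_{A})=1+o(1)$ uniformly; on the second event the uniform bounds $P(S_{\Lambda_{N}\setminus A}=\,\cdot\,)\le C/\sigma_{N}$ and $P(S_{\Lambda_{N}}=L_{N})\ge c/\sigma_{N}$ keep $R_{N}$ bounded, while $\nu_{\lambda_{N}}^{\otimes A}(S_{A}>a_{N}\log a_{N})=o(1)$ by the moment bound, so (as $f$ is bounded) this part is negligible. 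Hence
\[
E^{\mu_{N,L_{N}}}\!\Bigl[f\bigl(\tfrac{\cdot}{a_{N}}\bigr)\Bigr]=E^{\nu_{\lambda_{N}}^{\otimes A}}\!\bigl[f(\eta_{A}/a_{N})\bigr]+o(1)\ \longrightarrow\ E^{\overline{\mu}_{\alpha,T}^{A}}[f]\qquad(N\to\infty),
\]
by the one-site convergence and the uniform integrability established above. Finally, the uniform reshuffling model is the case $g=\delta_{0}$, for which $G\equiv g(0)$ and $\mu_{N,L}=\pi_{N,L}$; it falls under the hypothesis $\sum_{j}g(j)<\infty$ and gives the limit $\overline{\pi}_{T}^{A}$, which is the last assertion of the theorem.
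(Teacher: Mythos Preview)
Your proposal is correct and follows the same equivalence-of-ensembles scheme as the paper: write $\mu_{N,L_N}$ as the conditioning of a product $\nu_{\lambda_N}^{\otimes\Lambda_N}$ with fugacity tuned so that $E^{\nu_{\lambda_N}}[\eta(x)]=L_N/N$, use Tauberian asymptotics for $Z(\lambda)$ to identify the rescaled one-site limit, and control the Radon--Nikodym factor $R_N$ via a local limit theorem for the triangular array; the paper truncates at $Ra_N$ with $R\to\infty$ rather than your $a_N\log a_N$, but this is cosmetic.

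One caveat on the step you rightly flag as the obstacle. The assertion that ``the span-$1$ lattice property gives $|\widehat{\nu_{\lambda_N}}(t)|\le1-\delta$ uniformly in $N$'' is not quite accurate as stated: span $1$ gives $|\widehat{\nu_{\lambda_N}}(t)|<1$ for each fixed $N$ and each $t\in(0,2\pi)$, but since the measures $\nu_{\lambda_N}$ themselves drift as $\lambda_N\uparrow1$, a uniform $\delta$ does not follow automatically. The paper fills this gap with an estimate from Landim--Sethuraman--Varadhan,
\[
|\widehat{\nu_{\lambda_N}}(t)|\ \le\ \frac{1}{|1-e^{it}|}\Bigl(\nu_{\lambda_N}(0)+\sum_{k\ge0}\bigl|\nu_{\lambda_N}(k{+}1)-\nu_{\lambda_N}(k)\bigr|\Bigr),
\]
and then shows from the asymptotics of $g$ and $G$ that the bracketed quantity can be made $\le\gamma_0/\pi+\varepsilon$ for $N$ large, so that on $|t|\ge\gamma_0$ the bound is strictly below $1$. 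This is the concrete mechanism you would need to supply in your local limit theorem.
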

As an easy consequence of Proposition \ref{prop1} 
and Theorem \ref{thm1}, we obtain the following. 
\begin{cor}\label{cor1}
Let $\{X_n^{(N)}\}_{n\geq 0}$, $\{Y_n^{(N)}\}_{n\geq 0}$ 
and $\{Z_n^{(N)}\}_{n\geq 0}$ denote the immediate exchange model, 
the random saving model and the uniform reshuffling model 
on the state space $\Omega_{N}(L_N)$, respectively. 
Under the same conditions as in Proposition \ref{prop1} 
and Theorem \ref{thm1}, we have 
\begin{align*}
\lim_{N\to \infty}\lim_{n\to \infty}
E_{\eta} \Bigl[ \frac{1}{N}
\bigm|\!\!\bigl\{x\in \Lambda_N; 
\frac{1}{a_N} {X_n^{(N)} (x)} \in (b, c) \bigr\}
\!\!\bigm|\Bigr] 
= \mu_{\alpha, T}((b, c)), 
\end{align*}
\begin{align*}
\lim_{N\to \infty}\lim_{n\to \infty}
E_{\eta} \Bigl[ \frac{1}{N}
\bigm|\!\!\bigl\{x\in \Lambda_N; 
\frac{1}{a_N} {Y_n^{(N)} (x)} \in (b, c) \bigr\}
\!\!\bigm|\Bigr] 
= \mu_{\alpha, T}((b, c)), 
\end{align*}
and 
\begin{align*}
\lim_{N\to \infty}\lim_{n\to \infty}
E_{\eta} \Bigl[ \frac{1}{N}
\bigm|\!\!\bigl\{x\in \Lambda_N; 
\frac{1}{a_N} {Z_n^{(N)} (x)} \in (b, c) \bigr\}
\!\!\bigm|\Bigr] 
= \pi_{T}((b, c)), 
\end{align*}
for every $\eta \in \Omega_N(L_N)$ and 
every $0 \leq b < c \leq \infty$ 
where $E_{\eta}[\ \cdot \ ]$ denotes the expectation 
with respect to the law of the Markov chain with the 
initial condition $\eta$. 
\end{cor}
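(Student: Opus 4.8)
The plan is to deduce Corollary \ref{cor1} almost mechanically from Proposition \ref{prop1} and Theorem \ref{thm1}, the only genuine work being the passage from the weak convergence of $\mu_{N,L_N}$ (tested against bounded continuous local functions) to convergence of an empirical-mean of indicator functions of the open set $(b,c)$. First I would fix the initial condition $\eta\in\Omega_N(L_N)$ and recall that, by Proposition \ref{prop1}, each of the three chains has a unique stationary distribution and is irreducible and aperiodic on the finite state space $\Omega_N(L_N)$ (aperiodicity coming from the lazy feature, i.e.\ the permutation $\sigma$ can be the identity, or more directly because $\mu_{N,L_N}(\xi)>0$ for all $\xi$), so the Markov chain convergence theorem gives
\begin{align*}
\lim_{n\to\infty} E_\eta\Bigl[\tfrac1N\bigl|\{x\in\Lambda_N;\ \tfrac1{a_N}X_n^{(N)}(x)\in(b,c)\}\bigr|\Bigr]
=\tfrac1N\sum_{x\in\Lambda_N}\mu_{N,L_N}\bigl(\tfrac1{a_N}\xi(x)\in(b,c)\bigr),
\end{align*}
and likewise for $Y^{(N)}$ with the same $\mu_{N,L_N}$, and for $Z^{(N)}$ with $\pi_{N,L_N}$. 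By the permutation symmetry of $\mu_{N,L_N}$ (the product $\prod_x G(\xi(x))$ and the constraint set $\Omega_N(L_N)$ are both invariant under relabelling sites), each summand equals $\mu_{N,L_N}(\tfrac1{a_N}\xi(x_0)\in(b,c))$ for a fixed site $x_0$, so the inner limit is exactly $\mu_{N,L_N}(\tfrac1{a_N}\xi(x_0)\in(b,c))$, a one-site marginal probability; the same reduction applies to $\pi_{N,L_N}$.

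It then remains to show $\lim_{N\to\infty}\mu_{N,L_N}(\tfrac1{a_N}\xi(x_0)\in(b,c))=\mu_{\alpha,T}((b,c))$, i.e.\ that the laws of $\tfrac1{a_N}\xi(x_0)$ under $\mu_{N,L_N}$ converge weakly to $\mu_{\alpha,T}$. Theorem \ref{thm1}, applied with the single-site set $A=\{x_0\}$, gives precisely $E^{\mu_{N,L_N}}[f(\tfrac{\cdot}{a_N})]\to E^{\overline\mu_{\alpha,T}^{\{x_0\}}}[f]$ for every bounded continuous $f$ on $\mathbb R$, which is the Portmanteau statement of weak convergence of these one-site marginals to $\mu_{\alpha,T}$. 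Since the limiting measure $\mu_{\alpha,T}$ in \eqref{limdist} has a density, it assigns zero mass to the boundary $\{b,c\}$ of the interval, so by the Portmanteau theorem convergence of probabilities of the open (equivalently, the closed) set $(b,c)$ follows: $\mu_{N,L_N}(\tfrac1{a_N}\xi(x_0)\in(b,c))\to\mu_{\alpha,T}((b,c))$. The cases $b=0$ and $c=\infty$ require only the trivial remark that $0$ and $+\infty$ are still null for the boundary (the density is integrable, and $\mu_{\alpha,T}(\{0\})=0$), and the $\pi_{N,L_N}$/$Z^{(N)}$ statement is the same argument using the final assertion of Theorem \ref{thm1}.

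The only mild subtlety, and the step I would be most careful about, is the interchange of the two limits: the corollary states $\lim_{N}\lim_{n}$, so there is no interchange needed at all — for each fixed $N$ we first send $n\to\infty$ using the finite-state ergodic theorem, landing on the stationary marginal, and only then send $N\to\infty$ using Theorem \ref{thm1}. Thus no uniformity in $n$ is required, which is what makes the corollary ``easy''. I would just make sure to state explicitly why each chain is positive recurrent, irreducible and aperiodic on $\Omega_N(L_N)$ (this is exactly what Proposition \ref{prop1} asserts via the uniqueness of the stationary distribution together with finiteness of the state space and strict positivity of $\mu_{N,L_N}$), and to invoke the symmetry of $\mu_{N,L_N}$ and $\pi_{N,L_N}$ under site permutations to reduce the empirical average to a single one-site marginal. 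With those two observations in place the corollary follows from the Portmanteau theorem and the boundary-null property of the density \eqref{limdist}.
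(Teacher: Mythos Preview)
Your proposal is correct and follows essentially the same route as the paper: first pass to the stationary distribution via Proposition \ref{prop1} and the Markov chain convergence theorem, then use the permutation symmetry of $\mu_{N,L_N}$ to reduce the empirical average to a single-site marginal, and finally invoke Theorem \ref{thm1} together with the Portmanteau theorem (using that $\mu_{\alpha,T}$ has a density, hence null boundaries). The paper's proof is terser, simply saying ``basic facts about the weak convergence of probability measures'' where you spell out the Portmanteau step; one minor slip is your parenthetical that aperiodicity follows ``because $\mu_{N,L_N}(\xi)>0$ for all $\xi$'', which is not a valid reason---but this is harmless since you also correctly cite the lazy feature and Proposition \ref{prop1}.
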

\noindent
We note that it is natural to consider the 
scaling of the process by a factor of 
$\frac{1}{a_N}$. 
This is because, under the condition on $L_N$, 
we have 
$\frac{1}{N}\sum\limits_{x\in \Lambda_N} \bigl(
\frac{1}{a_N}X_n^{(N)} (x)
\bigr) = \frac{L_N}{Na_N}= T(1+o(1))$ as 
$N\to \infty$, 
which means that the asymptotic average number of coins per agent 
for the scaled process is given by $T$. 
This value corresponds to the money temperature 
in our model. 
Corollary \ref{cor1} provides a precise formulation and generalization 
of earlier studies in the physics literature, 
which were based on numerical simulations and heuristic arguments. 
As time approaches infinity, and in the large population and large 
money temperature limit, the asymptotic wealth distribution, 
i.e. the proportion of agents holding a specific number of coins 
converges to a Gamma distribution or an exponential distribution 
for both the immediate exchange model and 
the random saving model, while it converges to an exponential 
distribution for the uniform reshuffling model. 
The parameters of the Gamma distribution 
depend on the asymptotic behavior of the weight function $g$. 
When $\alpha>-1$ the limiting wealth distribution 
is given by a Gamma distribution with mean $T$ and 
shape parameter $\alpha +2$. 
While, when $\alpha \leq -1$, the limiting wealth distribution 
is given by an exponential distribution with mean $T$ and 
this does not depend on the parameter $\alpha$. 
In particular, if $g$ is a constant function 
then $\alpha=0$ and the limiting distribution 
is a Gamma distribution with mean $T$ and 
shape parameter two, which corresponds to 
(\ref{app}). 
Additionally, when $g(k)=(k+1)^{\alpha}$, 
$k\in \mathbb{Z}_+$, 
the above results are consistent with numerical simulations; 
see Figures 1 and 2. 
\begin{figure}[t]
\begin{tabular}{cc}
\begin{minipage}[t]{0.5\hsize}
\centering
\includegraphics[keepaspectratio, scale=0.5]{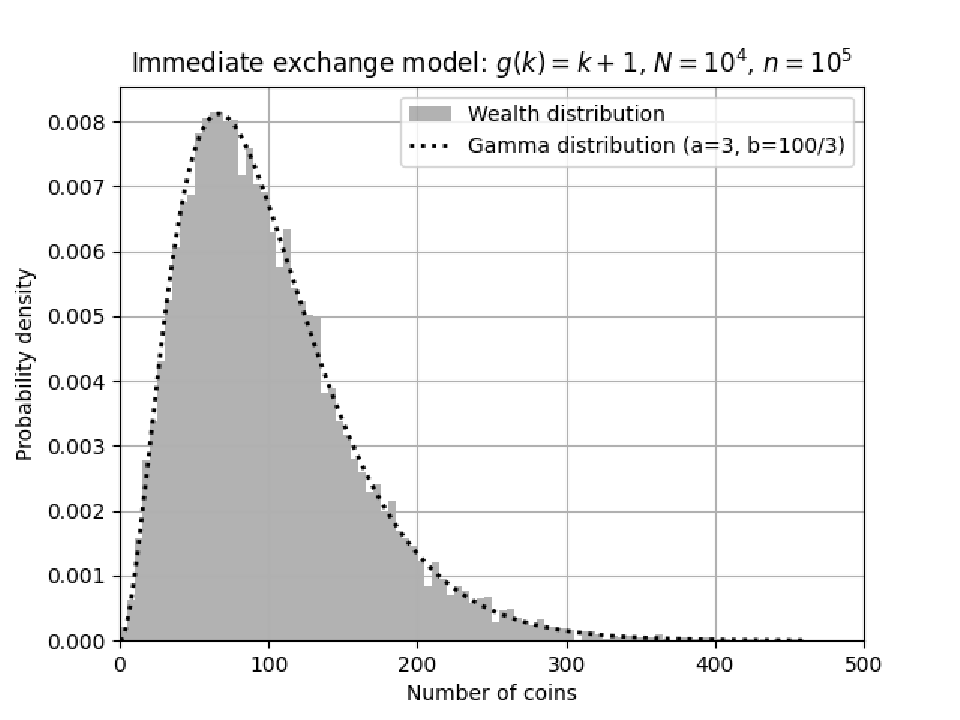}
\end{minipage} &
\begin{minipage}[t]{0.5\hsize}
\includegraphics[keepaspectratio, scale=0.5]{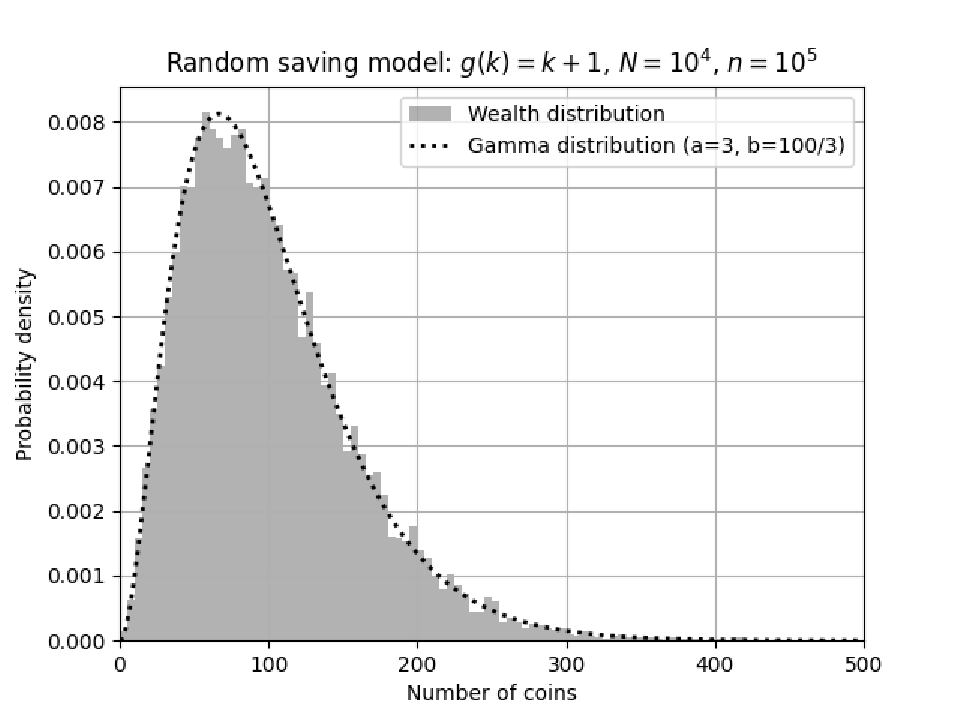}        
\end{minipage} \\
\begin{minipage}[t]{0.5\hsize}
\centering
\includegraphics[keepaspectratio, scale=0.5]{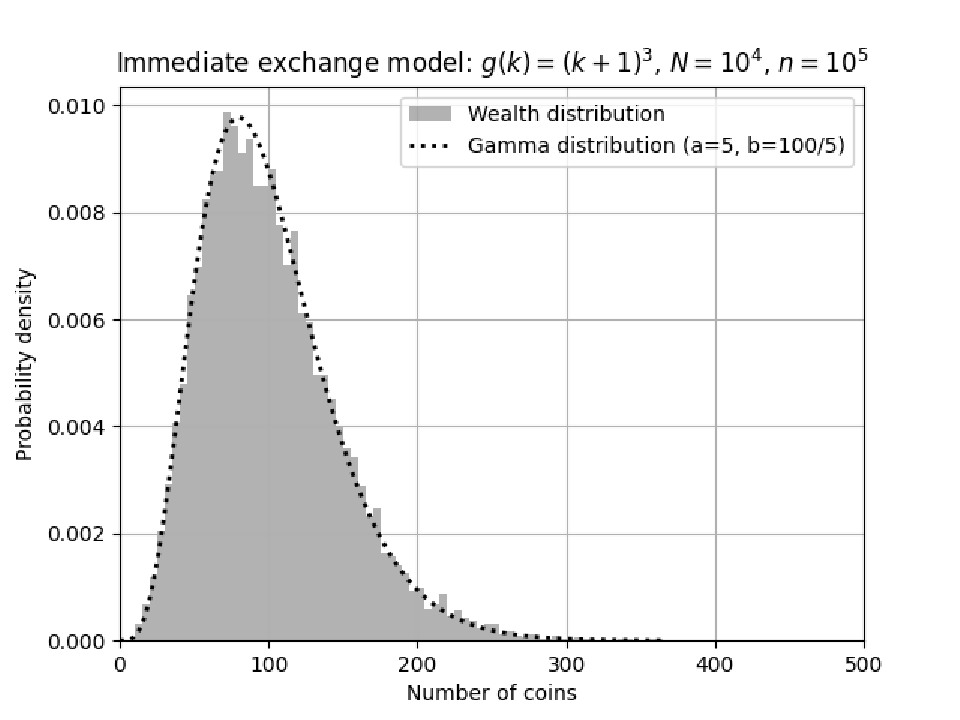}        
\end{minipage} &
\begin{minipage}[t]{0.5\hsize}
\includegraphics[keepaspectratio, scale=0.5]{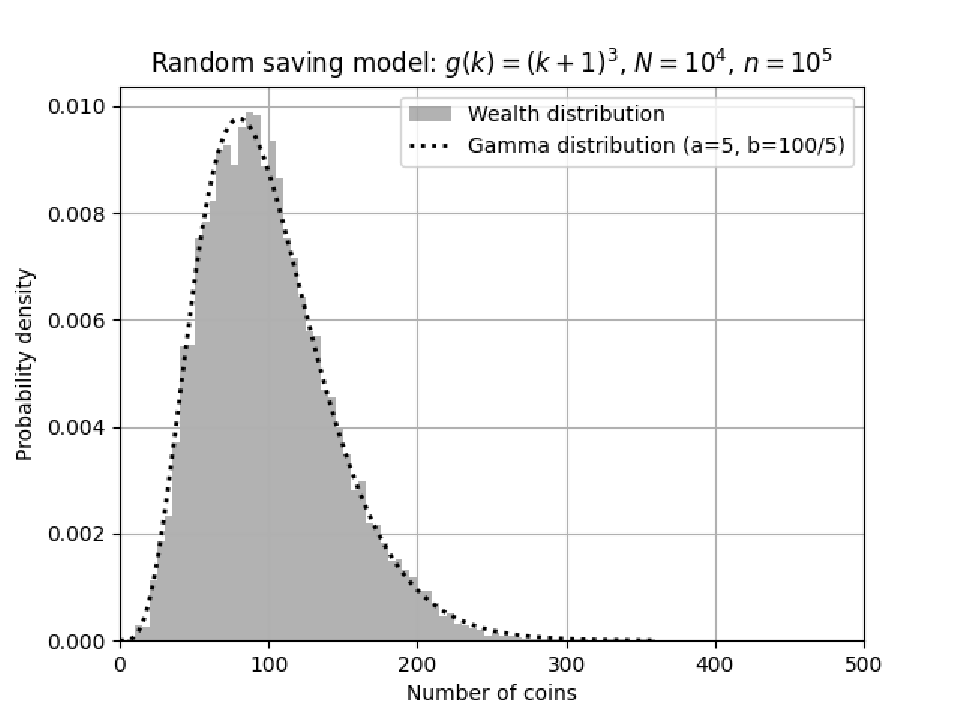}        
\end{minipage} 
\end{tabular}
\caption{Simulation results for a 
single realization of the immediate exchange model and the 
random saving model with $g(k)=(k+1)^{\alpha}$ where 
$\alpha = 1, 3$. 
The number of agents is $N=10^4$ and the total number of 
coins is $L=10^6$, namely the average number of coins per 
agent equals to $100$. 
The initial condition is set to a constant configuration 
$X_0 \equiv 100$ or $Y_0 \equiv 100$. 
$\rho$ is distributed uniformly over the edge set 
$\{\{x, y\}; x, y\in \Lambda_N, x\ne y\}$, 
and in the immediate exchange model, 
swapping shall always be performed between the selected edges.
The gray histograms represent 
the wealth distribution, i.e. 
the proportion of agents holding a specific number of coins 
after $n=10^5$ updates. 
The dotted line is the graph of the 
probability density function of the Gamma distribution: 
$f_{a, b}(r)= 
\frac{1}{\Gamma(a)b^{a}} r^{ a-1} 
e^{-\frac{1}{b}r}$ 
with the shape parameter $a=\alpha+2$ 
and the scale parameter $b= \frac{100}{\alpha+2}$. 
}
\end{figure}
\begin{figure}[htbp]
\begin{tabular}{cc}
\begin{minipage}[t]{0.5\hsize}
\centering
\includegraphics[keepaspectratio, scale=0.5]{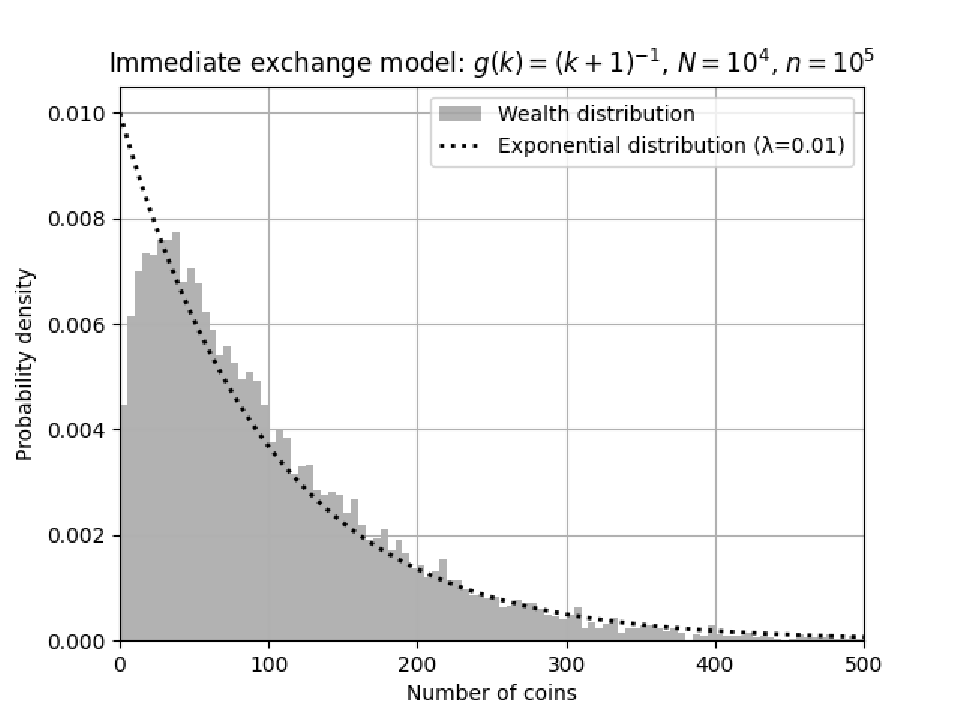}
\end{minipage} &
\begin{minipage}[t]{0.5\hsize}
\includegraphics[keepaspectratio, scale=0.5]{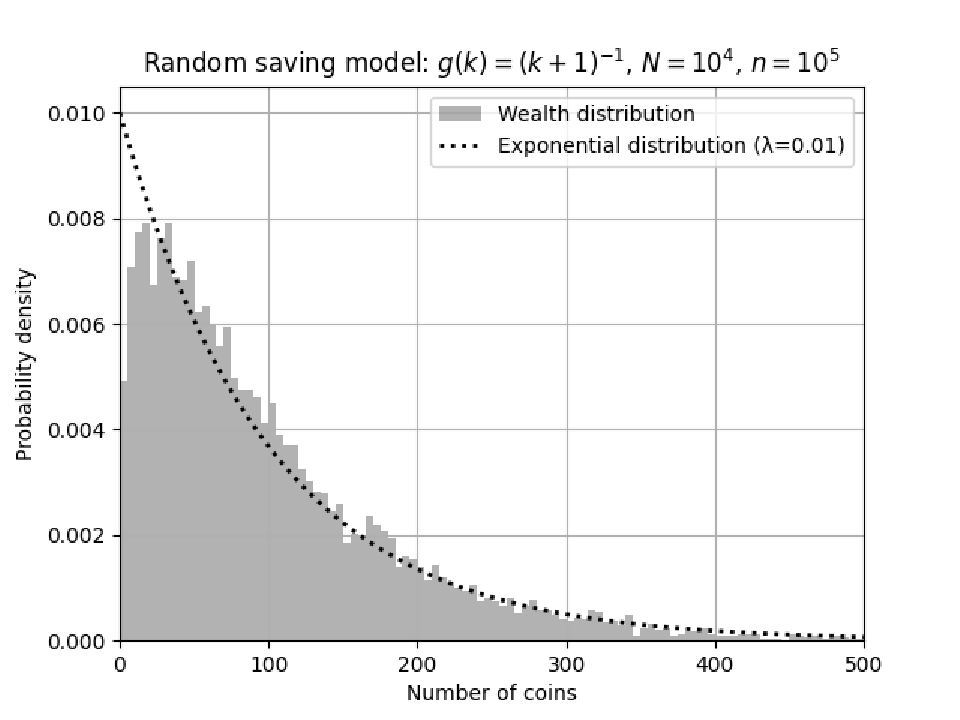}        
\end{minipage} \\
\begin{minipage}[t]{0.5\hsize}
\centering
\includegraphics[keepaspectratio, scale=0.5]{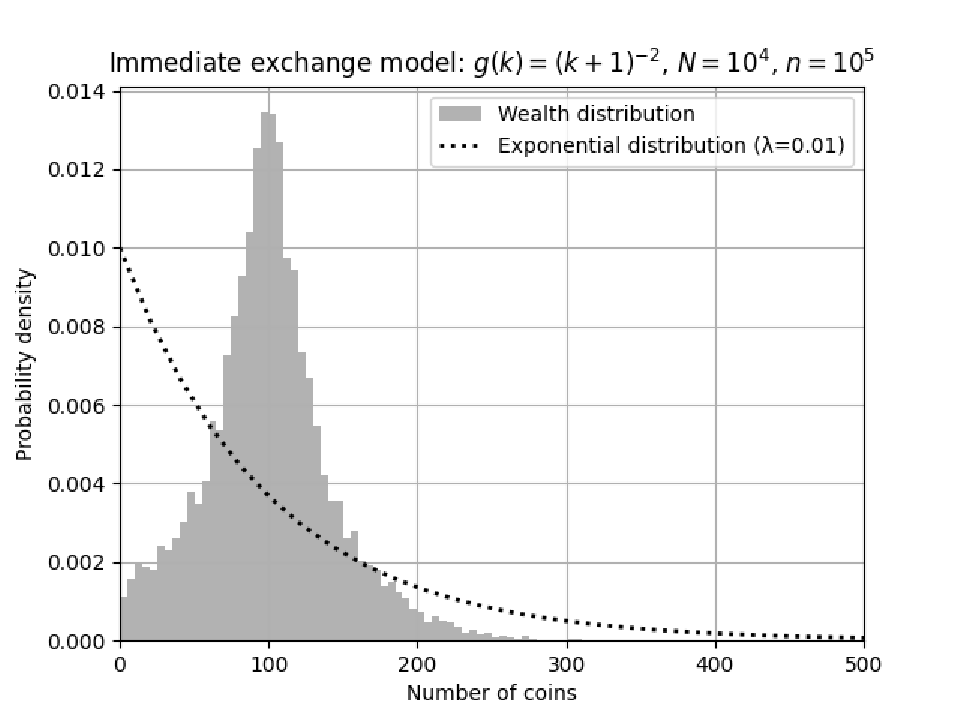}        
\end{minipage} &
\begin{minipage}[t]{0.5\hsize}
\includegraphics[keepaspectratio, scale=0.5]{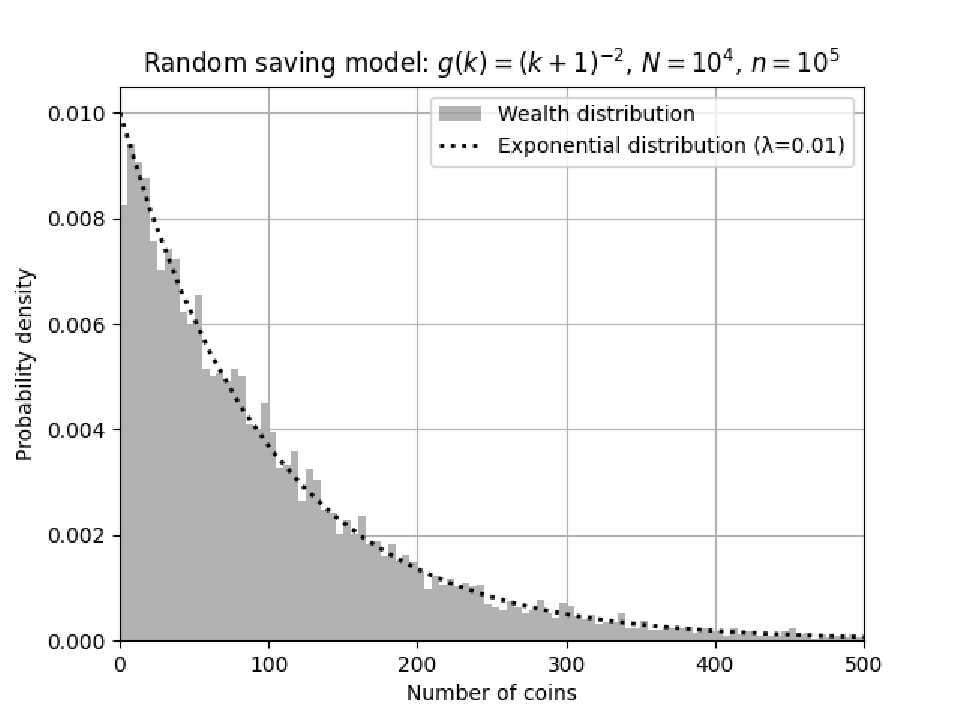}        
\end{minipage} \\
\begin{minipage}[t]{0.5\hsize}
\centering
\includegraphics[keepaspectratio, scale=0.5]{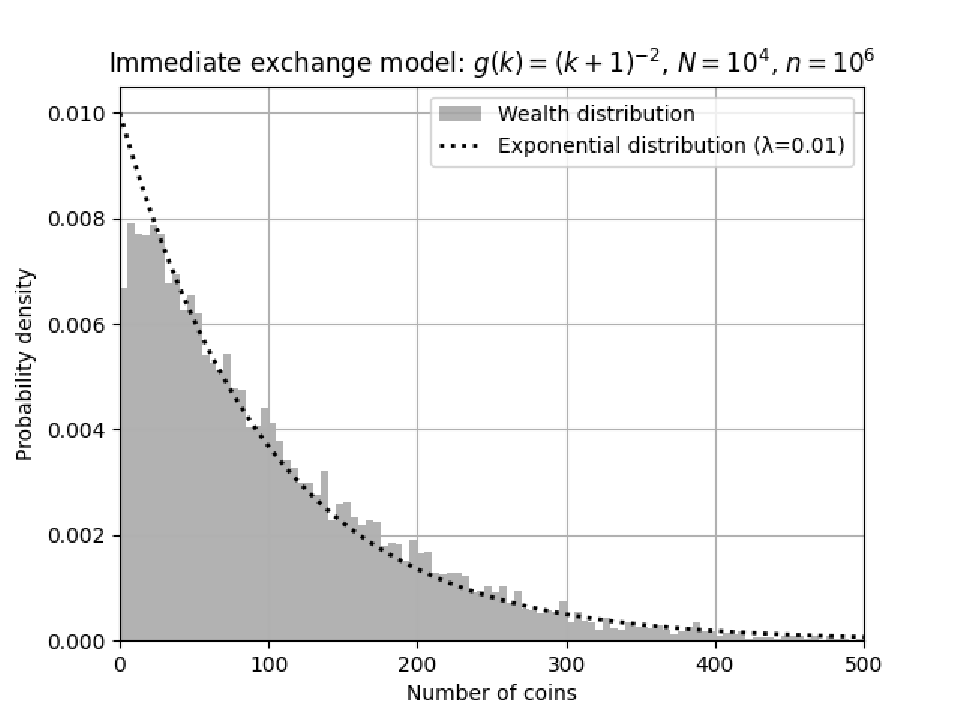}        
\end{minipage} &
\begin{minipage}[t]{0.5\hsize}
\includegraphics[keepaspectratio, scale=0.5]{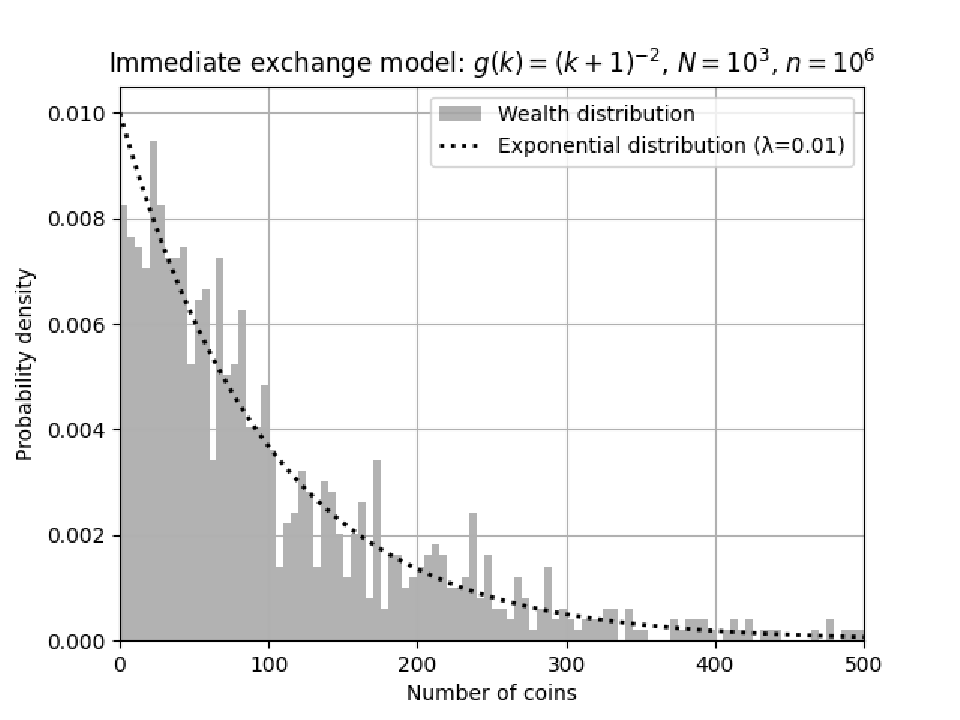}        
\end{minipage} 
\end{tabular}
\caption{Simulation results for a 
single realization of the immediate exchange model and the 
random saving model with $g(k)=(k+1)^{\alpha}$ where 
$\alpha = -1, -2$. 
The settings of the simulations are the same as Figure 1. 
The dotted line is the graph of the 
probability density function of the exponential distribution: 
$f_{\lambda}(r)= {\lambda} 
e^{-\lambda r}$ 
with the parameter $\lambda=\frac{1}{100}$. 
When the weight function $g$ decays rapidly 
in the immediate exchange model, 
the probability of each agent exchanging 
only a small number of coins at a time increases, 
leading to a longer convergence time to reach a steady state. 
The simulation result for the case $g(k)=(k+1)^{-2}$ 
reflects this and 
$n=10^5$ updates are not sufficient for convergence 
(the middle left).
However, after $n=10^6$ updates, the histograms approach 
the limiting probability density function (the lower left). 
Also, the result for the case 
$N=10^3$ and $n=10^6$ indicates that 
we need to take a large population $N$ 
(and hence a large $L$) for convergence (the lower right). 
}
\end{figure}
\begin{remark}
The limiting distribution (\ref{limdist}) for the case $\alpha \leq -1$ 
can be interpreted as follows: 
If the weight function $g$ decays rapidly in the random saving model, 
the probability that each agent saves a large number of coins becomes 
very small. 
Moreover, since we are considering the scaled process 
$\{\frac{1}{a_N}Y_n^{(N)}\}_{n\geq 0}$, 
we can assume that each agent offers nearly all the coins they have in 
each money exchange. 
Consequently, similarly to the uniform reshuffling model, 
the limiting distribution becomes an exponential 
distribution with mean $T$. 
In the immediate exchange model, 
when the weight function $g$ decays rapidly, 
the probability that each agent exchanges a large number of 
coins also becomes very small. 
For the scaled process, 
this situation can be regarded as similar to the one-coin model, 
where one agent gives only one coin to another agent at a time. 
In the one-coin model, the limiting distribution is expected to be exponential 
(cf. \cite{DY00}, \cite{L17}) and the result above 
aligns with this. 
Furthermore, a mathematically rigorous justification of the result 
for the one-coin model can be achieved 
by formulating and proving it in the same manner as demonstrated in this paper.
\end{remark}

In the rest of the paper 
we provide the proofs of Proposition \ref{prop1} 
and Theorem \ref{thm1} 
in Sections 2 and 3, respectively. 
We give some comments about the strategy of the proof. 
The proof of Proposition \ref{prop1} is standard. 
It is not difficult to see that our models are 
irreducible aperiodic 
Markov chains on the finite state space $\Omega_N(L)$. 
All we have to do is to characterize the unique stationary distribution, 
which is achieved by carefully verifying the detailed balance condition.
Namely, we demonstrate that our models are reversible Markov chains, 
and the reversible distributions for
$\{X_n\}_{n\geq 0}$ and $\{Y_n\}_{n\geq 0}$ are 
given by (\ref{stat}). 
With respect to the proof of Theorem \ref{thm1}, 
the convergence of the marginal distribution of $\mu_{N, L}$ 
is closely related to 
the equivalence of ensembles (cf. \cite{FV}, \cite{KL}). 
The sequence $\{G(k)\}_{k\in \mathbb{Z}_+}$ 
is neither a probability distribution nor generally summable over $k$. 
However, multiplying $G(k)$ by the exponential factor 
$s^k$, $s\in [0, 1)$ ensures the convergence of 
$\sum\limits_{k\in \mathbb{Z}_+}s^kG(k)$ under the assumption on $g$. 
The stationary distribution (\ref{stat}) can then be interpreted as 
the microcanonical distribution of an i.i.d. product, 
where the one-site marginal distribution is of exponential type 
and proportional to $G$. 
Instead of the usual condition 
$\frac{1}{N}\sum\limits_{x\in \Lambda_N}\eta(x) \to m \in (0, \infty)$ 
as $N\to \infty$, 
we consider the condition 
$\frac{1}{Na_N}\sum\limits_{x\in \Lambda_N}\eta(x) \to T \in (0, \infty)$ 
and investigate the convergence of the law of the scaled field 
$\frac{1}{a_N}\eta$ under the corresponding microcanonical distribution. 
We adapt the proof of the equivalence of ensembles from 
\cite[Appendix 2]{KL} to this unusual setting.
In particular, the local limit theorem 
for a triangular array of random variables 
plays an important role in the argument. 

Throughout the paper $C$, $C'$, $C''$ represent positive constants 
that do not depend on the size of the system $N$, 
but may depend on other parameters. 
These constants in various 
estimates may change from place to place in the paper. 


\section{Proof of Proposition \ref{prop1}}
In this section we prove Proposition \ref{prop1}. 
Under the condition that the hypergraph $(\Lambda_N, \mathcal{D}_{N, \rho})$ 
is connected, we have that 
for every $x, y\in \Lambda_N$ there exists a sequence 
$\{A_k\}_{0 \leq k \leq l}\subset \mathcal{D}_{N, \rho}$ 
such that 
$x\in A_0$, $y\in A_l$ and $A_{k} \cap A_{k+1} 
\ne \emptyset$ for every $0 \leq k \leq l-1$. 
Combining this fact with the assumption $g(0)>0$ 
and $g(1)>0$, 
it is easy to see that the following holds. 
\begin{itemize}
\item
For every $\xi, \eta\in \Omega_N(L)$ there exists $m=m(\xi, \eta)\geq 0$ 
such that 
$P(X_{m} = \eta | X_0=\xi) >0$. 
\item
$P(X_{n+1} = \eta | X_n=\eta) >0$ for every $\eta \in \Omega_N(L)$ 
and $n\geq 0$. 
\end{itemize}
The same statement also holds for $\{Y_n\}_{n\geq 0}$ 
under the condition $g(0)>0$. 
Namely, $\{X_n\}_{n \geq 0}$ and $\{Y_n\}_{n \geq 0}$ 
are irreducible aperiodic 
Markov chains on the finite state space 
$\Omega_N(L)$. 
Then, the stationary distribution of 
$\{X_n\}_{n \geq 0}\ (\{Y_n\}_{n \geq 0})$ 
uniquely exists and the law of $X_n\ (Y_n)$ 
converges to it in the limit $n \to \infty$ 
by the Markov chain convergence theorem. 
Therefore, all we have to show is that 
the measure $\mu_{N, L}$ given by (\ref{stat}) is the 
stationary distribution for both of 
$\{X_n\}_{n \geq 0}$ and $\{Y_n\}_{n \geq 0}$. 
Actually, by verifying the detailed balance condition: 
\begin{equation*}
\mu_{N , L}(\xi) P(X_{n+1} = \eta | X_n=\xi) 
= 
\mu_{N , L}(\eta) P(X_{n+1} = \xi | X_n=\eta) \ 
\text{ for every } \xi, \eta\in \Omega_N(L), 
\end{equation*}
and 
\begin{equation*}
\mu_{N , L}(\xi) P(Y_{n+1} = \eta | Y_n=\xi) 
= 
\mu_{N , L}(\eta) P(Y_{n+1} = \xi | Y_n=\eta) \ 
\text{ for every } \xi, \eta\in \Omega_N(L), 
\end{equation*}
we prove that 
(\ref{stat}) is the reversible distribution 
for both of 
$\{X_n\}_{n \geq 0}$ and $\{Y_n\}_{n \geq 0}$. 

\medskip

\noindent 
{\em{Proof for the immediate exchange model:}}\ 
Take arbitrary $\xi, \eta \in \Omega_N(L)$. 
There exists $A_0=A_0(\xi, \eta) \subset \Lambda_N$ 
such that 
$\xi \ne \eta$ on $A_0$ and 
$\xi = \eta$ on $\Lambda_N\setminus A_0$. 
Such set $A_0$ is uniquely determined from $\xi$ and $\eta$. 
We have that 
\begin{align*}
P(X_{n+1} = \eta | X_n=\xi) 
& = 
\sum_{{\substack{A\in \mathcal{D}_N \\ A\supset A_0}}} 
\rho(A) \sum_{\sigma \in \mathcal{S}_A}
\frac{1}{|\mathcal{S}_A|} 
P\bigl(\eta(z)= \xi(z)-c(z) + c(\sigma^{-1}(z)) \text{ for every }
z\in A \bigm|\! \xi \bigr), 
\end{align*}
where $P(\ \cdot \ | \xi )$ denotes the law of 
$\{c(x)\}_{}$ 
in the dynamics $(2)$ for given the configuration $\xi$. 
If $\rho(A)=0$ for every $A\in \mathcal{D}_N$ so that $A\supset A_0$, 
then the right-hand side is equal to $0$. 
For a finite set $A$, we label its elements as 
$A=\{x_1, x_2, \cdots, x_{|A|}\}$. 
Then, 
\begin{align*}
& P\bigl(\eta(z)= \xi(z)-c(z) + c(\sigma^{-1}(z)) \text{ for every }
z\in A \bigm|\! \xi \bigr) \\
& = 
\sum_{t_1=0}^{\xi(x_1)}\sum_{t_2=0}^{\xi(x_2)}\cdots 
\sum_{t_{|A|}=0}^{\xi(x_{|A|})}
\prod_{i=1}^{|A|} \frac{g(t_i)}{G(\xi(x_i))} \cdot 
I\bigl( \eta(x_i)= \xi(x_i)-t_i + t_{\sigma^{-1}(i)}
\text{ for every } 1 \leq i \leq |A|
\bigr) \\
& = 
\sum_{t_1=0}^{\xi(x_1)}\sum_{t_2=0}^{\xi(x_2)}\cdots 
\sum_{t_{|A|}=0}^{\xi(x_{|A|})}
\prod_{i=1}^{|A|} \Bigl\{ 
\frac{g(t_i)}{G(\xi(x_i))} 
I\bigl( \eta(x_i)= \xi(x_i)-t_i + t_{\sigma^{-1}(i)} 
\bigr)\Bigr\}, 
\end{align*}
where 
for each $\sigma \in \mathcal{S}_A$ and $1\leq i\leq |A|$, 
we identify $\sigma^{-1}(i)$ with 
the label $1\leq k \leq |A|$ which satisfies 
$x_k= \sigma^{-1}(x_i)$. 
Therefore, 
\begin{align}\label{DB1}
\begin{split}
&
\Bigl\{ \prod_{x \in \Lambda_N} G(\xi(x)) \Bigr\} \cdot 
P(X_{n+1} = \eta | X_n=\xi) \\
& = 
\sum_{{\substack{A\in \mathcal{D}_N \\ A\supset A_0}}} 
\Bigl[ 
\prod_{x \in \Lambda_N\setminus A}\!\! 
G(\xi(x)) \cdot 
\frac{\rho(A)}{|\mathcal{S}_A|} 
\sum_{\sigma \in \mathcal{S}_A}
\sum_{t_1=0}^{\xi(x_1)}\sum_{t_2=0}^{\xi(x_2)}\cdots 
\sum_{t_{|A|}=0}^{\xi(x_{|A|})}
\prod_{i=1}^{|A|} \Bigl\{ 
{g(t_i)}
I\bigl( \eta(x_i)= \xi(x_i)-t_i + t_{\sigma^{-1}(i)} 
\bigr)\Bigr\}\Bigr] \\
& = 
\sum_{{\substack{A\in \mathcal{D}_N \\ A\supset A_0}}} 
\Bigl[
\prod_{x \in \Lambda_N\setminus A}\!\!
\Bigl\{
\frac{G(\xi(x))+G(\eta(x))}{2} 
\Bigr\} \cdot 
\frac{\rho(A)}{|\mathcal{S}_A|} 
\sum_{\sigma \in \mathcal{S}_A}
\sum_{t_1=0}^{\xi(x_1)}\sum_{t_2=0}^{\xi(x_2)}\cdots 
\sum_{t_{|A|}=0}^{\xi(x_{|A|})}
\prod_{i=1}^{|A|} \Bigl\{ 
{g(t_i)}
I\bigl( \eta(x_i)= \xi(x_i)-t_i + t_{\sigma^{-1}(i)} 
\bigr)\Bigr\}\Bigr],
\end{split} 
\end{align}
where the last equality follows from 
$\xi = \eta$ on $\Lambda_N\setminus A_0$. 
For $A\subset \Lambda_N$ and 
$\sigma \in \mathcal{S}_A$, we set 
\begin{equation*}
h_A(\sigma):=
\sum_{t_1=0}^{\xi(x_1)}\sum_{t_2=0}^{\xi(x_2)}\cdots 
\sum_{t_{|A|}=0}^{\xi(x_{|A|})}
\prod_{i=1}^{|A|} \Bigl\{ 
{g(t_i)}
I\bigl( \eta(x_i)= \xi(x_i)-t_i + t_{\sigma^{-1}(i)} 
\bigr)\Bigr\}. 
\end{equation*}
To show that (\ref{DB1}) is symmetric 
with respect to $\xi$ and $\eta$, it is sufficient to show 
that $\sum\limits_{\sigma \in \mathcal{S}_A}h_A(\sigma)$ 
is symmetric with respect to $\xi$ and $\eta$ 
for every $A\in \mathcal{D}_N$ so that 
$A\supset A_0$. 
Now, each permutation $\sigma \in\mathcal{S}_A$ 
can be decomposed as the product of cyclic permutations 
and the summand in $h_A(\sigma)$ is given by a product form. 
Accordingly, 
the following two statements are sufficient for the symmetry of 
$\sum\limits_{\sigma \in \mathcal{S}_A}h_A(\sigma)$.  
\begin{itemize}
\item
When $|A|=2$, $h_A(\sigma)$ is symmetric with respect to 
$\{\xi(x)\}_{x \in A}$ and 
$\{\eta(x)\}_{x \in A}$ for
every $\sigma \in\mathcal{S}_A$. 
\item
When $|A|\geq 3$, $h_A(\sigma)+ h_A(\sigma^{-1})$ is symmetric with respect to 
$\{\xi(x)\}_{x \in A}$ and 
$\{\eta(x)\}_{x \in A}$ for
every cyclic permutation $\sigma \in\mathcal{S}_A$. 
\end{itemize}
These can be reformulated as follows:
\begin{lem}
Let $g: \mathbb{Z}_+ \to [0, \infty)$ and 
$a= \{a_i\}_{i=1}^n, b= \{b_i\}_{i=1}^n \in 
\mathbb{Z}_+^n$ be two sequences of non-negative integers 
which satisfy $\sum\limits_{i=1}^n a_i = 
\sum\limits_{i=1}^n b_i$. 
\begin{enumerate}[$(i)$]
\item
Let $n=2$ and define 
\begin{align*}
S=S(a, b):=
\sum_{t_1=0}^{a_1}\sum_{t_2=0}^{a_2}
\prod_{i=1}^{2} \Bigl\{ 
{g(t_i)}
I\bigl( t_i - t_{i+1} = a_i-b_i \bigr) \Bigr\}, 
\end{align*}
where we identify $t_3$ as $t_1$. 
Then, $S$ is symmetric with respect to $a$ and $b$. 
\item
Let $n\geq 3$ and define 
\begin{align*}
S=S_+(a, b) + S_-(a, b)
& := 
\sum_{t_1=0}^{a_1}\sum_{t_2=0}^{a_2}
\cdots \sum_{t_n=0}^{a_n} 
\prod_{i=1}^{n} \Bigl\{ 
{g(t_i)}
I\bigl( t_i - t_{i+1} = a_i-b_i \bigr)\Bigr\} \\ 
& \qquad \qquad 
+ 
\sum_{t_1=0}^{a_1}\sum_{t_2=0}^{a_2}
\cdots \sum_{t_n=0}^{a_n} 
\prod_{i=1}^{n} \Bigl\{ 
{g(t_i)}
I\bigl( t_i - t_{i-1} = a_i-b_i \bigr)\Bigr\}, 
\end{align*}
where we identify $t_{n+1}$ as $t_1$ and $t_0$ as $t_n$. 
Then, $S$ is symmetric with respect to $a$ and $b$. 
\end{enumerate}
\end{lem}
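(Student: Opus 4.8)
The plan is to collapse every multi-fold sum in the statement to a sum over a single free integer variable, after which the asserted symmetry drops out of one elementary algebraic identity. Set
\[ c_i := \sum_{j=1}^{i-1}(a_j - b_j), \qquad i = 1, \dots, n, \]
so that $c_1 = 0$ and $c_{i+1} = c_i + (a_i - b_i)$; since $\sum_i a_i = \sum_i b_i$, running this recursion one further step gives $c_{n+1} = 0 = c_1$, so $(c_i)$ may be treated as indexed cyclically modulo $n$. The identity that does all the work is
\[ a_i + c_i = b_i + c_{i+1} \qquad (i = 1, \dots, n), \]
an immediate consequence of the recursion.

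The key claim I would isolate and prove is: for $n \ge 2$,
\[ S_+(a,b) \;=\; S_-(b,a) \;=\; \sum_{t=\max_i c_i}^{\min_i(a_i + c_i)} \;\prod_{i=1}^n g(t - c_i) \;=:\; T(a,b), \]
an empty sum being $0$ by convention. For $S_+(a,b) = T(a,b)$: the indicators in $S_+$ impose the cyclic linear system $t_i - t_{i+1} = a_i - b_i$, whose solutions are exactly $t_i = t_1 - c_i$ with $t_1 \in \mathbb Z$ free (the closing relation $t_n - t_1 = a_n - b_n$ is automatic, again since $\sum a = \sum b$); the box constraints $0 \le t_i \le a_i$ then read $c_i \le t_1 \le a_i + c_i$ for every $i$, i.e.\ $\max_i c_i \le t_1 \le \min_i(a_i+c_i)$, and renaming $t_1$ as $t$ produces $T(a,b)$. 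For $S_-(b,a) = T(a,b)$: the indicators now impose $t_i - t_{i-1} = b_i - a_i$ cyclically; solving and putting $t := t_1 + c_2$ gives $t_i = t - c_{i+1}$, whence $\prod_i g(t_i) = \prod_i g(t - c_{i+1}) = \prod_i g(t - c_i)$ after a cyclic reindexing of the product, while $0 \le t_i \le b_i$ becomes $c_{i+1} \le t \le b_i + c_{i+1}$, which by the boxed identity is $c_{i+1} \le t \le a_i + c_i$; as $i \mapsto i+1$ permutes $\mathbb Z/n$, this is again $\max_i c_i \le t \le \min_i(a_i + c_i)$, so the sum equals $T(a,b)$.

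With the claim in hand, part $(ii)$ follows at once: $S = S_+(a,b) + S_-(a,b) = S_-(b,a) + S_+(b,a) = S(b,a)$, using the claim together with its $a \leftrightarrow b$ counterpart. For part $(i)$, note that when $n = 2$ the two cyclic neighbours $t_{i+1}$ and $t_{i-1}$ coincide, so $S_+$ and $S_-$ are the same sum there; since the quantity $S$ in $(i)$ is exactly $S_+(a,b)$ with $n = 2$, the claim gives $S(a,b) = S_+(a,b) = S_-(b,a) = S_+(b,a) = S(b,a)$.

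I do not expect a genuine obstacle — the argument is bookkeeping. The one point that needs care is checking that, after the respective substitutions, the admissible range of the free parameter $t$ comes out \emph{identical} for $S_+(a,b)$ and for $S_-(b,a)$, namely $\max_i c_i \le t \le \min_i(a_i + c_i)$: the lower endpoints coincide because $\{c_{i+1}\}_i = \{c_i\}_i$ as multisets, and the upper endpoints coincide precisely because of $b_i + c_{i+1} = a_i + c_i$. Keeping the two cyclic reindexings (of the product and of the constraint set) straight is the only thing one has to watch.
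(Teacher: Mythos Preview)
Your argument is correct. Both you and the paper hinge on the same identity $S_+(a,b)=S_-(b,a)$ (equivalently $S_-(a,b)=S_+(b,a)$), from which~(ii) is immediate. The executions differ, however. The paper never solves the cyclic linear system: it inserts into $S_+$ the redundant bound $t_{i+1}\le b_i$ (redundant because the relation $t_{i+1}=t_i-a_i+b_i$ together with $t_i\le a_i$ already forces it), does the same for $S_-$, and then performs the cyclic shift $t_i\mapsto t_{i+1}$ in $S_-$ to read off $S_-(a,b)=S_+(b,a)$ directly. You instead parametrize the solution set explicitly via $c_i=\sum_{j<i}(a_j-b_j)$ and collapse both $S_+(a,b)$ and $S_-(b,a)$ to the common closed form $T(a,b)=\sum_{t=\max_i c_i}^{\min_i(a_i+c_i)}\prod_i g(t-c_i)$. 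Your approach is a bit more explicit and, as you point out, handles $n=2$ and $n\ge 3$ in one stroke (the paper treats~(i) by a separate hands-on computation), at the price of introducing the auxiliary sequence $(c_i)$ and having to verify the endpoint identity $a_i+c_i=b_i+c_{i+1}$ carefully. The paper's shift trick is shorter but less transparent about why the two ranges of summation coincide.
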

\begin{proof}
\begin{enumerate}[$(i)$]
\item
Under the condition $a_1+a_2= b_1+b_2$, we have 
\begin{align*}
S(a, b) 
& =
\sum_{t_1=0}^{a_1}\sum_{t_2=0}^{a_2}
g(t_1)g(t_1-a_1+b_1)
I\bigl( t_{2} = t_1 -a_1+b_1 \bigr) \\
& = 
\sum_{t_1=0}^{a_1}
g(t_1)g(t_1-a_1+b_1)
I\bigl( 0 \leq  t_1 -a_1+b_1 \leq a_2 \bigr) \\
& = 
\sum_{t_1=0 \vee (a_1-b_1)}^{a_1\wedge b_2}
g(t_1)g(t_1-a_1+b_1) \\
& = 
\sum_{t_1= a_1  \vee b_1}^{(a_1+b_1) \wedge (b_1+b_2)}
g(t_1-b_1 )g(t_1-a_1). 
\end{align*}
\noindent
This is symmetric with respect to $a$ and $b$ 
because 
$b_1+b_2= \frac{1}{2}(
a_1+a_2+b_1+b_2)$. 
\item
We compute that 
\begin{align*}
S_+(a, b) 
& = 
\sum_{t_1 \geq 0}\sum_{t_2 \geq 0}
\cdots \sum_{t_n\geq 0} 
\Bigl[\prod_{i=1}^{n} \Bigl\{ 
{g(t_i)}
I\bigl( t_i - t_{i+1} = a_i-b_i \bigr)\Bigr\} 
\prod_{i=1}^{n} 
\Bigl\{I\bigl( t_i \leq a_i) I( t_{i+1} \leq b_i \bigr)
\Bigr\}\Bigr] \\ 
& = 
\sum_{t_1 \geq 0}\sum_{t_2 \geq 0}
\cdots \sum_{t_n\geq 0} 
\prod_{i=1}^{n} 
{g(t_i)}
\prod_{i=1}^{n} \Bigl\{ 
I\bigl( t_i - t_{i+1} = a_i-b_i \bigr)
I\bigl( t_i \leq a_i\wedge b_{i-1} \bigr)\Bigr\}. 
\end{align*}
Similarly, 
\begin{align*}
S_-(a, b) 
& = 
\sum_{t_1 \geq 0}\sum_{t_2 \geq 0}
\cdots \sum_{t_n\geq 0} 
\prod_{i=1}^{n} 
{g(t_i)}
\prod_{i=1}^{n} \Bigl\{ 
I\bigl( t_i - t_{i-1} = a_i-b_i \bigr)
I\bigl( t_i \leq a_i\wedge b_{i+1} \bigr)\Bigr\} \\
& = 
\sum_{t_1 \geq 0}\sum_{t_2 \geq 0}
\cdots \sum_{t_n\geq 0} 
\prod_{i=1}^{n} 
{g(t_i)}
\prod_{i=1}^{n} \Bigl\{ 
I\bigl( t_{i+1} - t_{i} = a_i-b_i \bigr)
I\bigl( t_{i+1} \leq a_i\wedge b_{i+1} \bigr)\Bigr\} \\
& = 
\sum_{t_1 \geq 0}\sum_{t_2 \geq 0}
\cdots \sum_{t_n\geq 0} 
\prod_{i=1}^{n} 
{g(t_i)}
\prod_{i=1}^{n} \Bigl\{ 
I\bigl( t_{i} - t_{i+1} = b_i-a_i \bigr)
I\bigl( t_{i} \leq a_{i-1} \wedge b_{i} \bigr)\Bigr\}\\
& = 
S_+(b, a), 
\end{align*}
where the second equality follows from 
rewriting the variable $t_i$ by $t_{i+1}$, $1\leq i \leq n$.  
Therefore, 
$S_+(a, b)+S_-(a, b)$ 
is symmetric with respect to $a$ and $b$. 
\end{enumerate}

\end{proof}

\medskip

\noindent
{\em{Proof for the random saving model:}}\ 
Take arbitrary $\xi, \eta \in \Omega_N(L)$. 
We use the similar notation as the 
proof for the immediate exchange model. 
\begin{align*}
& 
P(Y_{n+1} = \eta | Y_n=\xi) \\
& = 
\sum_{{\substack{A\in \mathcal{D}_N \\ A\supset A_0}}} 
\rho(A) 
P\bigl(\eta(z)= c(z) + d(z) \text{ for every }
z\in A \bigm| \xi \bigr) \\
& = 
\sum_{{\substack{A\in \mathcal{D}_N \\ A\supset A_0}}} 
\rho(A) 
\sum_{t_1=0}^{\xi(x_1)}\sum_{t_2=0}^{\xi(x_2)}\cdots 
\sum_{t_{|A|}=0}^{\xi(x_{|A|})}
\Bigl[
\prod_{i=1}^{|A|} \frac{g(t_i)}{G(\xi(x_i))} 
\sum\limits_{\zeta \in 
\Omega(A, S_A(\xi)-S_A(t))} 
\frac{1}{|\Omega(A, S_A(\xi)-S_A(t))|} \\
& \qquad \qquad \qquad \qquad \qquad \qquad \qquad \qquad 
\times 
I\bigl( \eta(x_i)= t_i + \zeta(x_i)
\text{ for every } 1 \leq i \leq |A|
\bigr) \Bigr] \\
& = 
\sum_{{\substack{A\in \mathcal{D}_N \\ A\supset A_0}}} 
\rho(A) 
\sum_{t_1=0}^{\xi(x_1)\wedge \eta(x_1)}
\sum_{t_2=0}^{\xi(x_2)\wedge \eta(x_2)}\cdots 
\sum_{t_{|A|}=0}^{\xi(x_{|A|})\wedge \eta(x_{|A|})}
\Bigl[
\prod_{i=1}^{|A|} \frac{g(t_i)}{G(\xi(x_i))} 
\sum\limits_{\zeta \in 
\Omega(A, S_A(\xi)-S_A(t))} 
\frac{1}{|\Omega(A, S_A(\xi)-S_A(t))|} \\
& \qquad \qquad \qquad \qquad \qquad \qquad \qquad \qquad 
\times 
I\bigl( \eta(x_i)= t_i + \zeta(x_i)
\text{ for every } 1 \leq i \leq |A|
\bigr)\Bigr], 
\end{align*}
where $S_A(t)=\sum\limits_{i=1}^{|A|}t_i$ for 
$t=\{t_i\}_{i=1}^{|A|}$. 
We have $S_A(\xi)=S_A(\eta)$ for $A\supset A_0$. 
Also, 
for given $t=\{t_i\}_{i=1}^{|A|}$ so that 
$0 \leq t_i \leq \xi(x_i) \wedge \eta(x_i)$ for 
every $1\leq i \leq |A|$, there exists unique 
$\zeta \in \Omega(A, S_A(\xi)-S_A(t))$ which 
satisfies $\eta(x_i)= t_i + \zeta(x_i)$ 
for every $1 \leq i \leq |A|$. 
Therefore, 
\begin{align*}\label{DB1}
& 
\Bigl\{ \prod_{x \in \Lambda_N} G(\xi(x))\Bigr\} \cdot 
P(Y_{n+1} = \eta | Y_n=\xi) \\
& \quad = 
\sum_{{\substack{A\in \mathcal{D}_N \\ A\supset A_0}}} 
\Bigl[
\prod_{x \in \Lambda_N\setminus A}\!\! \Bigl\{
\frac{G(\xi(x))+G(\eta(x))}{2} 
\Bigr\} \cdot 
{\rho(A)}
\sum_{t_1=0}^{\xi(x_1)\wedge \eta(x_1)}
\sum_{t_2=0}^{\xi(x_2)\wedge \eta(x_2)}\cdots 
\sum_{t_{|A|}=0}^{\xi(x_{|A|})\wedge \eta(x_{|A|})}\\
& \qquad \qquad \qquad \qquad \qquad \qquad \qquad 
\qquad \qquad \qquad \qquad \times 
\prod_{i=1}^{|A|}
{g(t_i)}\cdot 
\frac{1}{|\Omega(A, \frac{S_A(\xi)+S_A(\eta)}{2}-S_A(t))|} 
\Bigr]. 
\end{align*}
This is symmetric with respect to $\xi$ and $\eta$. 

If we define $g$ as $g(k)=\delta_0(k)$, $k\in \mathbb{Z}_+$, 
then $G\equiv 1$ and the above argument yields that 
$P(Y_{n+1} = \eta | Y_n=\xi)$ 
is symmetric with respect to $\xi$ and $\eta$. 
Therefore, the uniform reshuffling model 
is doubly stochastic and 
its unique stationary distribution is give by 
the uniform distribution on $\Omega_{N}(L)$. 
Actually, this matches when $G\equiv 1$ is set in 
(\ref{stat}). 
\qed

\section{Proof of Theorem \ref{thm1}}
For the proof of Theorem \ref{thm1}, we adapt the proof of the 
equivalence of ensembles for the i.i.d. product measure 
(cf. \cite[Appendix 2]{KL}). 
In the following we assume that $g(0)>0$ and 
there exist $\alpha\in \mathbb{R}$ and 
$c_\alpha \in (0, \infty)$ such that 
$\lim\limits_{k\to \infty}\frac{g(k)}{k^\alpha} =c_\alpha$. 
We prepare several notations. 
Define $G(k)=\sum\limits_{j=0}^k g(j)$, 
$k\in \mathbb{Z}_+$ and 
$Q_n(s)=\sum\limits_{k\geq 0} k^n s^k G(k)$, 
$n \in \mathbb{Z}_+$. 
By the assumption on $g$, we have the following asymptotics 
of $G(k)$ as $k\to \infty$. 
\begin{equation}\label{GG}
G(k) \sim 
\begin{cases}
\frac{c_\alpha}{\alpha +1}k^{\alpha +1} & 
\text{ if } \alpha > -1, \\
{c_\alpha}{\log k} & 
\text{ if } \alpha = -1, \\
C_0 & 
\text{ if } \alpha < -1, \\
\end{cases}
\end{equation}
where $C_0=C_0(g)>0$ is a constant which depends on $g$. 
In particular, the radius of convergence of 
$Q_n(s)$ is $1$ and it holds that 
$\lim\limits_{s\uparrow 1}Q_n(s) = \infty$ 
for every $n\in \mathbb{Z}_+$ and $\alpha \in \mathbb{R}$. 
We define the exponential family of distributions 
$\{\nu_s(\, \cdot\, ) ; s \in [0, 1)\}$ on 
$\mathbb{Z}_+$ by
$\nu_s (k) = \frac{s^k G(k)}{Q_0(s)}$, 
$k \in \mathbb{Z}_+$. 
It is easy to see that 
$E^{\nu_s}[\eta(0)] = \frac{Q_1(s)}{Q_0(s)}$ is continuous, 
increasing in $s$ 
and diverges to infinity as $s\uparrow 1$. 
Hence, for every $K>0$ there exists unique 
${s}^*={s}^*(K) \in (0, 1)$ such that 
$E^{\nu_{s^*}}[\eta(0)] = K$. 
To examine 
the asymptotic behavior of $s^*(K)$ as $K \to \infty$, 
we use a Tauberian theorem of the following form 
(cf. {\cite[Corollary 1.7.3]{BGT}}). 
\begin{thm}
Let $\{a_k\}_{k\geq 0}$ be a sequence of non-negative 
numbers and assume that 
$A(s)=\sum\limits_{k=0}^\infty a_k s^k$ converges 
for $s\in [0, 1)$ and $\{a_k\}_{k\geq 0}$ is monotone. 
Then, the following are equivalent. 
\begin{itemize}
\item
$A(s) \sim \Gamma(\beta +1) (1-s)^{-\beta} h(\frac{1}{1-s})$ 
as $s \uparrow 1$ for $\beta >0$ and slowly varying function $h$. 
\item
$a_k \sim \beta k^{\beta-1} h(k)$ as $k\to \infty$ 
for $\beta >0$ and slowly varying function $h$. 
\end{itemize}
\end{thm}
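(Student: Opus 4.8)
The statement is the Hardy--Littlewood--Karamata Tauberian theorem in its ``density'' form (the terms $a_k$, not just the partial sums $s_n:=\sum_{k=0}^{n}a_k$, are controlled), so the plan is to prove the two implications separately, in each case reducing to the classical statement about $s_n$. Throughout I would lean on the standard machinery of regular variation: the uniform convergence theorem for slowly varying functions and Potter's bounds $h(\lambda x)/h(x)\le C\max\{\lambda^{\delta},\lambda^{-\delta}\}$ for $x$ large, which supply the domination needed to pass limits through infinite sums and integrals. Write $s_{-1}:=0$, so $a_k=s_k-s_{k-1}$.

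For the Abelian direction ($a_k\sim \beta k^{\beta-1}h(k)\ \Rightarrow\ A(s)$ asymptotics), put $s=e^{-\tau}$ with $\tau\downarrow 0$, so that
\[
A(s)=\sum_{k\ge 0}a_k e^{-k\tau}\sim \beta\sum_{k\ge 1}k^{\beta-1}h(k)\,e^{-k\tau},
\]
the equivalence of the two series being justified by splitting the sum at $k=\varepsilon/\tau$ and $k=M/\tau$ and applying Potter's bounds together with $a_k\sim\beta k^{\beta-1}h(k)$. Treating the right-hand side as a Riemann sum and substituting $k\tau=x$ gives $\beta\,\tau^{-\beta}\int_0^\infty x^{\beta-1}h(x/\tau)\,e^{-x}\,dx$; since $h(x/\tau)\sim h(1/\tau)$ uniformly on compact subsets of $(0,\infty)$, dominated convergence yields $A(s)\sim\beta\,\Gamma(\beta)\,\tau^{-\beta}h(1/\tau)=\Gamma(\beta+1)\tau^{-\beta}h(1/\tau)$, which is the claim because $\tau\sim 1-s$ and $h$ is slowly varying. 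This direction does not use monotonicity.

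For the Tauberian direction I would proceed in two steps. Step 1 (Karamata): from $A(s)\sim\Gamma(\beta+1)(1-s)^{-\beta}h(1/(1-s))$ deduce $s_n\sim n^{\beta}h(n)$. Since $\sum_{n}s_n s^n=A(s)/(1-s)\sim\Gamma(\beta+1)(1-s)^{-\beta-1}h(1/(1-s))$, one rewrites this as a Laplace--Stieltjes-type integral and uses Karamata's approximation trick: approximate the discontinuous kernel recovering $\sum_{k\le n}$ uniformly from above and below by polynomials, for which the desired relation holds termwise; nonnegativity of the $a_k$ (hence monotonicity of $s_n$) makes the two-sided sandwiching go through. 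This is exactly BGT Theorem~1.7.1, which I would either quote or reproduce via this polynomial-approximation argument. Step 2 (monotone density): from $s_n\sim n^{\beta}h(n)$ with $\beta>0$ and $a_k=s_k-s_{k-1}$ monotone, conclude $a_k\sim\beta k^{\beta-1}h(k)$. For $\mu>1$,
\[
\frac{s_{[\mu k]}-s_k}{(\mu-1)k}=\frac{1}{(\mu-1)k}\sum_{k<j\le[\mu k]}a_j,
\]
and monotonicity of $\{a_j\}$ bounds the average on the right by $a_k$ from one side (and symmetrically with $\mu$ replaced by $\lambda<1$ from the other); using $s_n\sim n^{\beta}h(n)$ and slow variation gives $(s_{[\mu k]}-s_k)/k\sim(\mu^{\beta}-1)k^{\beta-1}h(k)$, and letting $\mu\downarrow 1$ (resp. $\lambda\uparrow 1$) pins $a_k$ down since $(\mu^{\beta}-1)/(\mu-1)\to\beta$. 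This is BGT Theorem~1.7.2.

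The main obstacle is Step~1: Karamata's polynomial-approximation argument is the genuinely nontrivial ingredient, as one must approximate a discontinuous kernel uniformly by polynomials while tracking the slowly varying correction, and the crucial use of nonnegativity to obtain matching upper and lower bounds is delicate. In the actual write-up the cleanest route is simply to invoke BGT Theorems~1.7.1 and 1.7.2 and assemble them, since Corollary~1.7.3 is precisely their combination; note that the monotonicity hypothesis in the statement is exactly what the density step (Step~2) requires and cannot be omitted.
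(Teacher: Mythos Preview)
Your proposal is correct and aligns with the paper's treatment: the paper does not prove this theorem at all but simply quotes it as Corollary~1.7.3 of Bingham--Goldie--Teugels. Your sketch goes further, spelling out the standard route (Abelian direction via Potter bounds and a Riemann-sum/Laplace argument; Tauberian direction via Karamata's theorem BGT~1.7.1 followed by the monotone density theorem BGT~1.7.2), which is precisely how BGT derives the corollary.
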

\noindent
By this theorem and (\ref{GG}), the following asymptotics holds 
in the limit $s\uparrow 1$.  
\begin{equation}\label{ZZ}
Q_n(s) \sim 
\begin{cases}
\frac{c_\alpha \Gamma(n+\alpha + 2)}{\alpha +1}
(1-s)^{-(\alpha +n+2)} & 
\text{ if } \alpha > -1, \\
{c_\alpha \Gamma( n+1)}
(1-s)^{-(n+1)} \log \frac{1}{1-s}& 
\text{ if } \alpha = -1, \\
{C_0 \Gamma( n+1)}
(1-s)^{-(n+1)} & 
\text{ if } \alpha < -1, 
\end{cases}
\end{equation}
where we used the relation 
$\Gamma (\beta +1) =\beta \Gamma(\beta)$ for every $\beta>0$. 
Therefore, 
\begin{equation*}
E^{\nu_s}[\eta(0)] = \frac{Q_1(s)}{Q_0(s)} \sim 
\begin{cases}
\frac{\alpha + 2}{1-s} & 
\text{ if } \alpha > -1, \\
\frac{1}{1-s} & 
\text{ if } \alpha \leq -1, \\
\end{cases}
\end{equation*}
and this yields that 
\begin{equation}\label{sK}
s^*(K) = 
\begin{cases}
1- \frac{\alpha + 2}{K}(1+o(1)) & 
\text{ if } \alpha > -1, \\
1- \frac{1}{K}(1+o(1)) & 
\text{ if } \alpha \leq -1, \\
\end{cases}
\end{equation}
as $K\to \infty$. 
By these asymptotics we also have 
\begin{align}\label{varK}
\mathrm{Var}_{\nu_{s^*(K)}}(\eta (0)) 
= \frac{Q_2(s^*(K))}{Q_0(s^*(K))} -K^2 
=
\begin{cases}
\frac{1}{\alpha + 2}{K^2}(1+o(1)) & 
\text{ if } \alpha > -1, \\
{K^2}(1+o(1)) & 
\text{ if } \alpha \leq -1, \\
\end{cases}
\end{align}
as $K\to \infty$. 

Next, for each $s \in [0, 1)$ and 
$B \subset \mathbb{Z}$, let $\overline{\nu}_s^B$ 
be the product measure on 
$\mathbb{Z}_+^B$ whose one site marginal distribution equals to $\nu_s$. 
$\overline{\nu}_s^{B}
\bigl(\, \cdot\, \bigm| \Omega(B, L) \bigr)$
denotes the conditioned probability of $\overline{\nu}_s^B$ 
on the event that the total number of coins on $B$ 
equals to $L$. 
Then, we have the following key identity for (\ref{stat}). 
\begin{equation*}
\mu_{N, L}(\, \cdot\,) = 
\overline{\nu}_s^{\Lambda_N}
\bigl(\, \cdot\, \bigm| \Omega_N(L) \bigr). 
\end{equation*}
Notice that the right-hand side does not depend on the 
choice of the parameter $s$. 
Let $f:\mathbb{R}^A \to \mathbb{R}$ be a 
bounded continuous local function 
where $A$ is a finite subset of $\mathbb{Z}_+$. 
For every $N\in \mathbb{N}$ so that $\Lambda_N \supset A$, 
we have 
\begin{align}
\begin{split}\label{dec}
E^{\mu_{N, L}}\bigl[ f\bigl( 
\frac{\cdot}{a_N} \bigr)\bigr] 
& = 
\sum\limits_{\eta\in \mathbb{Z}_+^A} 
f\bigl( \frac{\eta}{a_N} \bigr) 
\overline{\nu}_s^{\Lambda_N}
\Bigl( \xi_A = \eta \bigm| \sum\limits_{x\in \Lambda_N}
\xi(x)= L \Bigr) \\
& = 
\sum\limits_{\eta\in \mathbb{Z}_+^A} 
f\bigl( \frac{\eta}{a_N} \bigr) 
\overline{\nu}_s^{A}(\eta) 
+
\sum\limits_{\eta\in \mathbb{Z}_+^A} 
f\bigl( \frac{\eta}{a_N} \bigr) 
\Bigl\{
\frac{\overline{\nu}_s^{\Lambda_N \setminus A}
\Bigl( \sum\limits_{x\in \Lambda_N\setminus A}
\xi(x)= L -S_A(\eta)\Bigr)}
{\overline{\nu}_s^{\Lambda_N }
\Bigl( \sum\limits_{x\in \Lambda_N}
\xi(x)= L \Bigr)}
-1\Bigr\} \overline{\nu}_s^{A}(\eta) 
\\
& =: I_1 +I_2,
\end{split} 
\end{align}
where $\xi_A$ denotes the configurations 
$\xi$ restricted on the set $A$. 
Now, we set $L=L_N$ 
where $\{L_N \}_{N\geq 1}$ be a sequence of positive integers 
that satisfies 
$\lim\limits_{N\to\infty}\frac{L_N}{N a_N}=T$ 
for some constant $T>0$ and divergent sequence 
$\{a_N\}_{N\geq 1}$. 
We also take 
$s$ in the right-hand side of (\ref{dec}) 
as $s_N^*:=s^*(\frac{L_N}{N})$. 
For this choice of $s$ we show that 
$I_1 \to \int_{\mathbb{R}_+^A} f(r) 
\overline{\mu}_{\alpha, T}^A(dr)$ 
and $I_2 \to 0$ as $N\to \infty$. 

For the proof of the convergence of $I_1$, we assume that 
$f$ is a function of one variable for notational simplicity. 
The general case can be proven by the similar manner since 
$\overline{\nu}_s^A$ 
is a product measure with the same marginal distribution. 
Firstly, we consider the case $\alpha>-1$. 
Let $R>0$. By (\ref{GG}) and (\ref{ZZ}), 
\begin{align*}
I_1 
& = 
\sum\limits_{k \geq 0} 
f\bigl( \frac{k}{a_N} \bigr) \frac{1}{Q_0(s_N^*)} (s_N^*)^k G(k) \\
& = 
\frac{1}{\Gamma(\alpha +2)}(1-s_N^*)
\sum\limits_{k = 0}^{[R a_N]} 
f\bigl( \frac{k}{a_N} \bigr)  (s_N^*)^k 
\bigl((1-s_N^*)k \bigr)^{\alpha+1} \\
& \qquad \qquad \qquad \qquad \qquad \qquad 
+ 
\frac{1}{\Gamma(\alpha +2)}(1-s_N^*)
\sum\limits_{k = [R a_N]+1}^\infty 
f\bigl( \frac{k}{a_N} \bigr)  (s_N^*)^k 
\bigl((1-s_N^*)k \bigr)^{\alpha+1} 
+o(1) \\
& =: I_{3}+I_4+ o(1), 
\end{align*}
as $N\to \infty$. We note that 
since $Q_0(s_N^*) \to \infty$ in the limit $N\to \infty$, 
a finite sum in $I_1$ is negligible and we can replace 
$G$ with the right-hand side of (\ref{GG}) 
with an error of $o(1)$. 
Then, by (\ref{sK}) and the condition on $L_N$, 
\begin{align*}
I_3 
& = 
\frac{1}{\Gamma(\alpha +2)}\frac{\alpha+2}{T}
\frac{1}{a_N} 
\sum\limits_{k = 0}^{[Ra_N]} 
f\bigl( \frac{k}{a_N} \bigr)  
\Bigl\{\bigl(1-\frac{\alpha+2}{T}\frac{1}{a_N}\bigr)^{a_N}
\Bigr\}^{\frac{k}{a_N}} 
\bigl(\frac{\alpha+2}{T}\frac{k}{a_N}\bigr)^{\alpha+1} 
+o(1) \\
& 
\to 
\frac{1}{\Gamma(\alpha+2)} \bigl(\frac{\alpha+2}{T}
\bigr)^{\alpha +2} \int_0^R f(r) e^{-\frac{\alpha+2} 
{T}r}r^{\alpha +1}dr, 
\end{align*}
as $N\to \infty$ where the convergence 
follows from Riemann integral. 
By taking the limit $R\to \infty$, 
the right-hand side converges to 
$\int_0^\infty f(r) \mu_{\alpha, T}(dr)$. 
For $I_4$, we have 
\begin{align*}
|I_4| 
& \leq 
\frac{1}{\Gamma(\alpha +2)}\frac{\alpha+2}{T}
\frac{1}{a_N} 
\sum\limits_{k =[Ra_N]+1}^{\infty} 
\bigm|\!\! f\bigl( \frac{k}{a_N} \bigr)\!\!\bigm| 
\bigl(1-\frac{\alpha+2}{T}\frac{1}{a_N}\bigr)^{k} 
\bigl(\frac{\alpha+2}{T}\frac{k}{a_N}\bigr)^{\alpha+1} 
+o(1) \\
& \leq 
C \frac{1}{a_N} 
\sum\limits_{k =[Ra_N]+1}^{\infty} 
e^{-\frac{\alpha+2}{T}\frac{k}{a_N}}
\bigl(\frac{k}{a_N}\bigr)^{\alpha+1} 
+o(1) \\
& 
\leq C'
\sum\limits_{j =[R]}^{\infty} 
e^{-\frac{\alpha+2}{T}j}
j^{\alpha+1} 
+o(1), 
\end{align*}
for every $N$ large enough where $C, C'$ are positive 
constants independent of $N$. 
By taking the limits $N\to \infty$ and $R\to\infty$, 
we obtain $I_4\to 0$. 

Secondly, we consider the case $\alpha=-1$. 
By (\ref{GG}) and (\ref{ZZ}) again, 
\begin{align*}
I_1 
& = 
\frac{1-s_N^*}{-\log (1-s_N^*)} 
\sum\limits_{k = 0}^{[Ra_N]} 
f\bigl( \frac{k}{a_N} \bigr)  (s_N^*)^k \log k
+ 
\frac{1-s_N^*}{-\log (1-s_N^*)} 
\sum\limits_{k = [Ra_N]+1}^\infty 
f\bigl( \frac{k}{a_N} \bigr)  (s_N^*)^k \log k
+o(1) \\
& =: I'_{3}+I'_4+ o(1), 
\end{align*}
as $N\to \infty$. 
${-\log (1-s_N^*)}=(1+o(1))\log a_N$ and this yields that 
\begin{align*}
I'_3 
& = 
\frac{1}{T a_N \log a_N}
\sum\limits_{k = 0}^{[Ra_N]} 
f\bigl( \frac{k}{a_N} \bigr)   
\Bigl\{\bigl(1-\frac{1}{T a_N}\bigr)^{a_N}
\Bigr\}^{\frac{k}{a_N}} 
{\log k}
+o(1) \\
& = 
\frac{1}{T a_N}
\sum\limits_{k = 0}^{[Ra_N]} 
f\bigl( \frac{k}{a_N} \bigr)   
\Bigl\{\bigl(1-\frac{1}{T a_N}\bigr)^{a_N}
\Bigr\}^{\frac{k}{a_N}} 
+ 
\frac{1}{T \log a_N}
\frac{1}{a_N}
\sum\limits_{k = 0}^{[Ra_N]} 
f\bigl( \frac{k}{a_N} \bigr)   
\Bigl\{\bigl(1-\frac{1}{T a_N}\bigr)^{a_N}
\Bigr\}^{\frac{k}{a_N}} 
\log \bigl(\frac{k}{a_N}\bigr)
+o(1). 
\end{align*}
In the limits $N\to \infty$ and $R\to \infty $, 
the first term of the right-hand side converges to 
$\frac{1}{T}\int_0^\infty f(r)
e^{-\frac{r}{T}}dr$ 
and the second term vanishes 
due to the extra factor $\frac{1}{\log a_N}$. 
$I'_4$ goes to $0$ in the same way as $I_4$. 
The case $\alpha<-1$ also follows from 
the similar argument. 

Next, for the convergence of $I_2$, 
we use the following local limit theorem. 
\begin{thm}\label{lclt}
Let $\{b_N\}_{N \geq 1}$ be a sequence of positive numbers 
which satisfies $\lim\limits_{N\to \infty}b_N=\infty$ and 
set $s_N^*:= s^*(b_N)$. 
For each $N\in \mathbb{N}$, 
$\{X_j^{(N)}\}_{j\in \Lambda_N}$ denotes a family of 
independent and identically distributed 
$\mathbb{Z}_+$-valued random variables 
with common distribution $\nu_{s_N^*}$. 
Then, for every finite set $B\subset \mathbb{Z}_+$, it holds that 
\begin{align}\label{lclt1}
\lim\limits_{N\to \infty} 
\sup\limits_{L \geq 0} \Bigm|\!
\sqrt{\sigma_N^2(N-|B|)} 
P \Bigl(
\sum_{j \in \Lambda_N \setminus B} X_j^{(N)} =L \Bigr) -
\frac{1}{\sqrt{2\pi}}
\exp\Bigl\{-\frac{( L-(N-|B|)b_N)^2}
{ 2 \sigma_N^2(N-|B|)}
\Bigr\}
\!\Bigm| =0, 
\end{align}
where $\sigma_N^2 = \mathrm{Var}(X_1^{(N)})$. 
\end{thm}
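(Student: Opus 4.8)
\textbf{Proof proposal for Theorem \ref{lclt}.}

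The plan is to prove this as a local central limit theorem for a triangular array of lattice random variables, following the classical Fourier-analytic approach (as in Gnedenko--Kolmogorov or \cite[Appendix 2]{KL}), but taking care that the law $\nu_{s_N^*}$ changes with $N$ and that the variance $\sigma_N^2$ blows up. First I would reduce to a statement about normalized sums: write $n_N := N - |B|$, let $S_{n_N} = \sum_{j\in\Lambda_N\setminus B} X_j^{(N)}$, and observe that $X_j^{(N)}$ takes values in $\mathbb{Z}_+$, so $S_{n_N}$ is supported on $\mathbb{Z}$ with span $1$ (the span is exactly $1$ because $\nu_{s_N^*}(0), \nu_{s_N^*}(1)>0$ for all large $N$, using $g(0)>0$ and $G(1)=g(0)+g(1)>0$). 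By the Fourier inversion formula for lattice variables,
\begin{equation*}
P(S_{n_N} = L) = \frac{1}{2\pi}\int_{-\pi}^{\pi} e^{-i\theta L}\,\varphi_N(\theta)^{n_N}\,d\theta,
\end{equation*}
where $\varphi_N(\theta) = E[e^{i\theta X_1^{(N)}}]$, and the target Gaussian density has an analogous integral representation. Substituting $\theta = t/\sqrt{\sigma_N^2 n_N}$ and centering by the mean $b_N$ reduces (\ref{lclt1}) to showing that
\begin{equation*}
\int_{-\pi\sqrt{\sigma_N^2 n_N}}^{\pi\sqrt{\sigma_N^2 n_N}} \Bigl| e^{it(b_N n_N - L)/\sqrt{\sigma_N^2 n_N}}\bigl(e^{-itb_N/\sqrt{\sigma_N^2 n_N}}\varphi_N(t/\sqrt{\sigma_N^2 n_N})\bigr)^{n_N} - e^{-t^2/2} \Bigr|\,dt \to 0,
\end{equation*}
uniformly in $L$; the uniformity in $L$ is automatic here because after pulling out the modulus the oscillating prefactor has absolute value one and drops out, so the bound does not see $L$ at all.

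The integral is then split into the usual three regimes. On the central region $|t| \le \delta_N$ with $\delta_N \to \infty$ slowly, I would Taylor-expand the characteristic function of the centered, normalized variable: $\log\bigl(e^{-itb_N/\sqrt{\sigma_N^2 n_N}}\varphi_N(t/\sqrt{\sigma_N^2 n_N})\bigr)^{n_N} = -\tfrac{t^2}{2} + o(1)$ pointwise in $t$, with a dominating bound $\le e^{-t^2/4}$ valid for $N$ large, so dominated convergence kills this piece. The error terms in the expansion involve the third absolute moment $E|X_1^{(N)} - b_N|^3$; from the asymptotics (\ref{ZZ}) one gets $E[(X_1^{(N)})^3] = Q_3(s_N^*)/Q_0(s_N^*) \asymp b_N^3$, hence the third moment is $O(b_N^3) = O(\sigma_N^{3})$ by (\ref{varK}), which makes the standard Lyapunov-type remainder $O\bigl(\sigma_N^3 n_N / (\sigma_N^2 n_N)^{3/2}\bigr) = O(n_N^{-1/2}) \to 0$ --- this is the arithmetic that makes the central regime work. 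On the intermediate region $\delta_N \le |t| \le \vare \sqrt{\sigma_N^2 n_N}$ for a small fixed $\vare$, I would use that $|\varphi_N(\theta)| \le 1 - c\theta^2$ for $|\theta|$ small (again with a constant uniform in $N$, coming from a uniform lower bound on $\mathrm{Var}_{\nu_{s_N^*}}$ relative to the scale, or more simply from $\nu_{s_N^*}(0),\nu_{s_N^*}(1)$ bounded below), so the integrand is bounded by $e^{-ct^2}$ there and the tail of the Gaussian handles the rest. On the outer region $\vare\sqrt{\sigma_N^2 n_N} \le |\theta\sqrt{\sigma_N^2 n_N}| \le \pi\sqrt{\sigma_N^2 n_N}$, i.e. $\vare \le |\theta|\le\pi$, I need $\sup_{\vare \le |\theta|\le\pi}|\varphi_N(\theta)| \le 1 - \kappa$ for some $\kappa>0$ uniform in $N$; then $|\varphi_N(\theta)|^{n_N} \le (1-\kappa)^{n_N}$ decays geometrically and, after the change of variables, contributes $O(\sqrt{\sigma_N^2 n_N}(1-\kappa)^{n_N}) \to 0$.

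The main obstacle is precisely this last uniform-in-$N$ non-lattice bound $\sup_{\vare\le|\theta|\le\pi}|\varphi_N(\theta)| \le 1-\kappa$: since $\nu_{s_N^*}$ is a moving target and $s_N^* \uparrow 1$, one must rule out the characteristic functions developing a near-resonance. I would handle it by noting $|\varphi_N(\theta)|^2 = \sum_{k,\ell} \nu_{s_N^*}(k)\nu_{s_N^*}(\ell)\cos\bigl(\theta(k-\ell)\bigr)$, so $1 - |\varphi_N(\theta)|^2 \ge \nu_{s_N^*}(0)\nu_{s_N^*}(1)\bigl(1 - \cos\theta\bigr) \ge c_0(1-\cos\vare) > 0$ for $\vare \le |\theta|\le\pi$, using that $\nu_{s_N^*}(0) = 1/Q_0(s_N^*)$ and $\nu_{s_N^*}(1) = s_N^* G(1)/Q_0(s_N^*)$ --- wait, these go to $0$ as $N\to\infty$. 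So that crude bound is not enough; instead I would use that the \emph{normalized} increment $(k-\ell)/\sigma_N$ has an order-one variance, so for $|\theta|$ of order $1$ (much larger than $1/\sigma_N$) the pair $(k,\ell)$ ranging over a window of width $\asymp\sigma_N$ around $b_N$ (which carries probability bounded below) produces enough cancellation; quantitatively, $1 - |\varphi_N(\theta)| \ge c\,\theta^2\sigma_N^2 \wedge \kappa$, and on $\vare\le|\theta|\le\pi$ this is bounded below by a constant since $\sigma_N\to\infty$. Making this rigorous --- essentially a uniform anti-concentration / Cramér-type estimate for the array $\{\nu_{s_N^*}\}$ --- is the technical heart of the proof; everything else is bookkeeping with the asymptotics already recorded in (\ref{ZZ}), (\ref{sK}) and (\ref{varK}).
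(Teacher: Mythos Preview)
Your overall architecture --- Fourier inversion, change of variables, splitting into central/intermediate/outer regimes --- is exactly what the paper does, and you have correctly identified the outer region $\vare \le |\theta| \le \pi$ as the genuine obstacle. Two points of comparison are worth noting.

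\emph{Central region.} You control the remainder via a Lyapunov third-moment bound, using $E[(X_1^{(N)})^3] = Q_3(s_N^*)/Q_0(s_N^*) \asymp b_N^3$ from (\ref{ZZ}). The paper instead verifies Lindeberg's condition directly: it shows $\frac{1}{\sigma_N^2}E\bigl[(X_1^{(N)}-b_N)^2;\,X_1^{(N)} \ge \tfrac{\vare}{2}\sqrt{N\sigma_N^2}\bigr] \to 0$ by the same Riemann-sum computations used elsewhere in the proof of Theorem~\ref{thm1}. Both routes work here; yours is arguably cleaner once the moment asymptotics are in hand.

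\emph{Outer region.} Here your proposal is incomplete, and the paper's method is genuinely different from what you sketch. Your suggested bound $1-|\varphi_N(\theta)| \ge c\,\theta^2\sigma_N^2 \wedge \kappa$ via ``the pair $(k,\ell)$ ranging over a window of width $\asymp\sigma_N$ produces enough cancellation'' is a heuristic, and you acknowledge it needs to be made rigorous; in particular it does not obviously handle $\theta$ near $\pi$ uniformly. The paper avoids this entirely by an Abel-summation/total-variation estimate borrowed from \cite[Lemma~5.4]{LSV96}: for $0<\delta<1$,
\[
\Bigl|\sum_{k\ge 0}(\delta e^{i\theta})^k \nu_{s_N^*}(k)\Bigr| \le \frac{1}{|1-\delta e^{i\theta}|}\Bigl\{\nu_{s_N^*}(0) + \sum_{k\ge 0}|\nu_{s_N^*}(k+1)-\nu_{s_N^*}(k)|\Bigr\}.
\]
The point is that the bracketed quantity is \emph{small}: $\nu_{s_N^*}(0)\to 0$, and since $\frac{\nu_{s_N^*}(k+1)}{\nu_{s_N^*}(k)} = s_N^*\bigl(1+\frac{g(k+1)}{G(k)}\bigr)$, one has $|\nu_{s_N^*}(k+1)-\nu_{s_N^*}(k)| \le \nu_{s_N^*}(k)\bigl((1-s_N^*)+\frac{g(k+1)}{G(k)}\bigr)$, and both terms are $O(1/b_N)$ for $k \ge \vare b_N$ by (\ref{GG}) and (\ref{sK}). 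Summing and letting $\delta\uparrow 1$, together with $|1-e^{i\theta}| \ge \tfrac{2}{\pi}|\theta|$, gives $|\varphi_N(\theta)| \le \frac{\pi}{2|\theta|}\cdot o(1)$, hence $|\varphi_N(\theta)| \le r < 1$ uniformly on $\gamma_0 \le |\theta| \le \pi$ for $N$ large. This is the missing ingredient in your sketch: rather than anti-concentration of $X_1^{(N)}-X_2^{(N)}$, the paper exploits that $\nu_{s_N^*}$ has small total variation as a sequence, which is exactly what the smoothness of $G$ and the scale $b_N\to\infty$ provide.
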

\noindent
The proof of this theorem is given later. 
By applying this theorem for 
$L=L_N$ and $b_N=\frac{L_N}{N}$, 
we have 
\begin{align*}
\frac{\overline{\nu}_{s_N^*}^{\Lambda_N \setminus A}
\Bigl( \sum\limits_{x\in \Lambda_N\setminus A}
\xi(x)= L_N -S_A(\eta)\Bigr)}
{\overline{\nu}_{s_N^*}^{\Lambda_N }
\Bigl( \sum\limits_{x\in \Lambda_N}
\xi(x)= L_N \Bigr)} 
& = 
\frac{\sqrt{N}}{\sqrt{N-|A|}}
\frac
{\exp\Bigl\{- 
\frac{(L_N-S_A(\eta) -(N-|A|)\frac{L_N}{N})^2}
{{2\sigma^2_N}{(N-|A|)}}\Bigr\} + o(1)}
{1 + o(1)}, 
\end{align*}
as $N\to \infty$ 
for every $\eta \in \mathbb{Z}_+^A$. 
Note that $o(1)$ terms do not depend on $\eta$. 
Set $D_A(k) = \{0, 1, \cdots, k\}^A$, $k\in \mathbb{Z}_+$. 
Then, the above asymptotics and (\ref{varK}) yield that 
\begin{align*}
\lambda_{N, R} := 
\sup\limits_{\eta \in D_A([Ra_N])} 
\Bigm| \! 
\frac{\overline{\nu}_{s_N^*}^{\Lambda_N \setminus A}
\Bigl( \sum\limits_{x\in \Lambda_N\setminus A}
\xi(x)= L_N -S_A(\eta)\Bigr)}
{\overline{\nu}_{s_N^*}^{\Lambda_N }
\Bigl( \sum\limits_{x\in \Lambda_N}
\xi(x)= L_N \Bigr)} -1 
\! \Bigm| 
\to 0, 
\end{align*}
as $N\to \infty$ for every $R>0$ and 
\begin{align*}
\sup_{N\geq 1}
\sup\limits_{\eta \in \mathbb{Z}_+^A} 
\Bigm|\! 
\frac{\overline{\nu}_{s_N^*}^{\Lambda_N \setminus A}
\Bigl( \sum\limits_{x\in \Lambda_N\setminus A}
\xi(x)= L_N -S_A(\eta)\Bigr)}
{\overline{\nu}_{s_N^*}^{\Lambda_N }
\Bigl( \sum\limits_{x\in \Lambda_N}
\xi(x)= L_N \Bigr)}-1 
\!\Bigm|\, 
\leq C'', 
\end{align*}
for some constant $C''>0$. 
Therefore, 
\begin{align*}
|I_2| 
& \leq 
\sum\limits_{\eta \in D_A([Ra_N])} 
\bigm|\! f\bigl( \frac{\eta}{a_N} \bigr) \!\bigm|
\Bigm|\! 
\frac{\overline{\nu}_{s_N^*}^{\Lambda_N \setminus A}
\Bigl( \sum\limits_{x\in \Lambda_N\setminus A}
\xi(x)= L_N -S_A(\eta)\Bigr)}
{\overline{\nu}_{s_N^*}^{\Lambda_N }
\Bigl( \sum\limits_{x\in \Lambda_N}
\xi(x)= L_N \Bigr)}
-1 \! \Bigm| \overline{\nu}_{s_N^*}^{A}(\eta) \\
& \qquad \quad + 
\sum\limits_{\eta\in 
\mathbb{Z}_+^A \setminus D_A([Ra_N])} 
\bigm|\! f\bigl( \frac{\eta}{a_N} \bigr) \!\bigm| 
\Bigm|\! 
\frac{\overline{\nu}_{s_N^*}^{\Lambda_N \setminus A}
\Bigl( \sum\limits_{x\in \Lambda_N\setminus A}
\xi(x)= L_N -S_A(\eta)\Bigr)}
{\overline{\nu}_{s_N^*}^{\Lambda_N }
\Bigl( \sum\limits_{x\in \Lambda_N}
\xi(x)= L_N \Bigr)}
-1\! \Bigm| \overline{\nu}_{s_N^*}^{A}(\eta) \\
& \leq 
\| f\|_{\infty}\lambda_{N, R} + 
C''
\sum\limits_{\eta\in 
\mathbb{Z}_+^A \setminus D_A([Ra_N])} 
\bigm|\! f\bigl( \frac{\eta}{a_N} \bigr) \!\bigm| 
\overline{\nu}_{s_N^*}^{A}(\eta).
\end{align*}
The first term of the right-hand side goes to $0$ 
as $N\to \infty$ for every $R>0$ and 
the second term goes to $0$ 
as $N\to\infty$ and $R\to \infty$ by the similar 
computation as the estimate of $I_4$ above. 
Hence, we obtain $I_2 \to 0$ 
and this completes the proof. 

If we assume the condition: 
$g(0)>0$ and $\sum\limits_{j\geq 0} g(j) <\infty$ 
for $g:\mathbb{Z}_+\to [0, \infty)$ instead, 
then the proof for the case $\alpha<-1$ above 
can be applied as it is. 

\qed
\begin{remark}
If $( L-(N-|B|)b_N)^2 \gg { 2 \sigma_N^2(N-|B|)}$
in (\ref{lclt1}) then the exponential term 
converges to $0$, 
rendering the local limit theorem ineffective.
For this reason, 
the local limit theorem in the form of Theorem \ref{lclt} 
was insufficient to prove the equivalence of ensembles 
in general settings, and a more refined version 
such as the Edgeworth expansion of at least the second order 
was necessary (cf. the proofs of Corollary 1.4 and 
Corollary 1.7 in \cite[Appendix 2]{KL}). 
On the other hand, such an expansion is not necessary and 
Theorem \ref{lclt} is sufficient in our case. 
Because we divided the summation 
$\sum\limits_{\eta \in \mathbb{Z}_+^A}$ of $I_2$ 
into 
$\sum\limits_{\eta \in D_A([Ra_N])}$ 
and  
$\sum\limits_{\eta \in 
\mathbb{Z}_+^A \setminus D_A([Ra_N])}$, 
the estimate for the latter part was reduced 
to the estimate of 
$\overline{\nu}_{s_N^*}^{A}
(\mathbb{Z}_+^A \setminus D_A([Ra_N]))$ 
which can be managed because we know the explicit 
form of ${\nu}_{s_N^*}$. 
\end{remark}

\medskip

\noindent
{\em Proof of Theorem \ref{lclt}.}
First of all, we note that 
Theorem \ref{lclt} corresponds to 
the local limit theorem for a triangular array 
of random variables 
since $\nu_{s_N^*}$ depends on the number of 
random variables $N$. 
Combining this with 
the fact that $s_N^*:= s^*(b_N) \to 1$ as 
$N\to \infty$, we cannot directly 
apply Theorem 1.3 or Theorem 1.5 in 
\cite[Appendix 2]{KL} which studied the refined version of 
the local limit theorem for the i.i.d random variables 
with common distribution $\nu_s$, $s\in (0, 1)$. 
Also, the known criteria of the 
local limit theorem for a triangular array 
of integer-valued random variables 
(e.g. \cite{DM95}, \cite{M91}) 
do not hold in our setting since 
$\sigma_N^2 \to \infty$ as $N\to \infty$. 
Therefore, we give the proof of the theorem 
according to the classical argument \cite[Chapter VII]{Pet}. 
For notational simplicity we only consider 
the case $B=\emptyset$. The modification for the general finite 
set $B\subset \mathbb{Z}_+$ is straightforward. 

Set $\widetilde{X}_j^{(N)} := \frac{1}{\sqrt{N\sigma^2_N}}
(X_j^{(N)} - b_N)$, $j \in \Lambda_N$ and 
$t_N(L):= \frac{1}{\sqrt{N\sigma^2_N}}
(L-Nb_N)$. 
We define $\phi_N(\theta) = 
E\bigl[\exp\{i\theta X_1^{(N)}\}\bigr]$ and 
$\psi_N(\theta) = 
E\bigl[\exp \bigl\{i\theta (\sum\limits_{j \in \Lambda_N} 
\widetilde{X}_j^{(N)})\bigr\}\bigr]$, $\theta \in \mathbb{R}$ 
where $i= \sqrt{-1}$. 
By the inversion formula, we have 
\begin{align*}
P \bigl(\sum\limits_{j\in \Lambda_N} X_j^{(N)} =L\bigr) 
= \frac{1}{2\pi \sqrt{N\sigma_N^2}} 
\int_{-\pi\sqrt{N\sigma_N^2}}^{\pi\sqrt{N\sigma_N^2}} 
e^{-i\theta t_N(L)} \psi_N(\theta) d\theta. 
\end{align*}
Therefore, 
\begin{align*}
& 2\pi \Bigm|\! \sqrt{N\sigma_N^2} 
P \bigl(\sum\limits_{j\in \Lambda_N} X_j^{(N)} =L\bigr) 
- \frac{1}{\sqrt{2 \pi}} e^{-\frac{1}{2}t_N(L)^2} \!\Bigm| \\
& = 
\Bigm|\! 
\int_{-\pi\sqrt{N\sigma_N^2}}^{\pi\sqrt{N\sigma_N^2}} 
e^{-i\theta t_N(L)} \psi_N(\theta) d\theta 
- 
\int_{-\infty}^{\infty} 
e^{-i\theta t_N(L)} e^{-\frac{1}{2} \theta^2} d\theta 
\!\Bigm| \\ 
& \leq 
\int_{|\theta| \leq R} 
|\psi_N(\theta) -e^{-\frac{1}{2} \theta^2} | d\theta 
+ 
\int_{R \leq |\theta| \leq \gamma \sqrt{N\sigma_N^2}} 
|\psi_N(\theta) | d\theta 
+ 
\int_{\gamma \sqrt{N\sigma_N^2} \leq 
|\theta| \leq \pi \sqrt{N\sigma_N^2}} 
|\psi_N(\theta) | d\theta 
+ 
\int_{|\theta| \geq R} 
e^{-\frac{1}{2} \theta^2} d\theta \\
& =: I_1+ I_2+I_3+I_4, 
\end{align*}
for every $R>0$ and $0<\gamma <\pi$. 
We show that the right-hand side converges to $0$ as 
$N\to \infty$ and $R\to \infty$. 

For $I_1$, assume that the law of $\sum\limits_{j \in \Lambda_N} 
\widetilde{X}_j^{(N)}$ converges to 
the standard normal distribution. 
Then, we have 
$\lim\limits_{N\to \infty} \psi_N(\theta) 
= e^{-\frac{1}{2} \theta^2}$ for every 
$\theta \in \mathbb{R}$ and we obtain 
$I_1 \to 0$ as $N\to \infty$ by the bounded 
convergence theorem. 
For the convergence of the law of 
$\sum\limits_{j \in \Lambda_N} 
\widetilde{X}_j^{(N)}$, 
we have only to show that 
$\sum\limits_{j\in \Lambda_N} 
E\bigl[
(\widetilde{X}_j^{(N)})^2; |\widetilde{X}_j^{(N)} 
| \geq \varepsilon\bigr] \to 0$ 
as $N\to \infty$ for every $\vare>0$ 
by Lindberg's central limit theorem 
(cf. \cite[Theorem 3.4.10]{Dur}). 
By (\ref{varK}), 
$|\widetilde{X}_j^{(N)} | \geq \vare$ implies that 
$X_j^{(N)} \geq \frac{1}{2}\vare \sqrt{N\sigma_N^2}$ 
for every $N$ large enough. Therefore, 
\begin{align*}
\sum\limits_{j \in \Lambda_N} 
E\bigl[
(\widetilde{X}_j^{(N)})^2; 
|\widetilde{X}_j^{(N)}| \geq 
\varepsilon\bigr] 
& \leq 
\frac{1}{\sigma_N^2} 
E\Bigl[(X_1^{(N)}-b_N)^2; 
X_1^{(N)} \geq \frac{1}{2}\vare \sqrt{N\sigma_N^2} 
\Bigr] \\
& \leq 
\frac{2}{\sigma_N^2} 
E\Bigl[({X}_1^{(N)})^2; 
{X}_1^{(N)} \geq \frac{1}{2}\vare \sqrt{N\sigma_N^2} \Bigr] 
+ 
\frac{2b_N^2}{\sigma_N^2} 
P \Bigl( 
{X}_1^{(N)} \geq \frac{1}{2}\vare \sqrt{N\sigma_N^2} \Bigr)\\
& =: J_1+J_2, 
\end{align*}
where the second inequality follows from the fact that 
$(a+b)^2 \leq 2a^2 +2b^2$ for every $a, b\in \mathbb{R}$. 
We first consider the case $\alpha> -1$ 
for the estimate of $J_1$. 
\begin{align*}
J_1 & 
= 
\frac{2}{\sigma_N^2} 
\sum\limits_{k \geq \frac{1}{2}\vare \sqrt{N\sigma_N^2}}
k^2 \frac{1}{Q_0(s_N^*)}(s_N^*)^k G(k)\\
& \leq 
\frac{C}{b_N^2}  
\sum\limits_{k \geq \frac{\vare}{4\sqrt{\alpha+2}} \sqrt{N b_N^2}}
k^2 \Bigl(\frac{\alpha+2}{b_N}\Bigr)^{\alpha+2} 
\Bigl\{\bigl(1-\frac{\alpha+2}{b_N} \bigr)^{b_N}\Bigr\}
^{\frac{k}{b_N}}k^{\alpha+1} \\ 
& \leq 
{C'}\frac{1}{b_N} 
\sum\limits_{ \frac{k}{b_N} \geq \frac{\vare}{4\sqrt{\alpha+2}} 
\sqrt{N}}
\Bigl(\frac{k}{b_N}\Bigr)^{\alpha+3} 
\Bigl\{\bigl(1-\frac{\alpha+2}{b_N} \bigr)^{b_N}\Bigr\}
^{\frac{k}{b_N}}, 
\end{align*}
for some constants $C, C'>0$ 
and every $N$ large enough 
where we used (\ref{ZZ}), (\ref{sK}) and (\ref{varK}) 
for the first inequality. 
The right-hand side goes to $0$ as $N\to\infty$ 
because 
\begin{align*}
\frac{1}{b_N} 
\sum\limits_{ \frac{k}{b_N} \geq a}
\Bigl(\frac{k}{b_N}\Bigr)^{\alpha+3} 
\Bigl\{\bigl(1-\frac{\alpha+2}{b_N} \bigr)^{b_N}\Bigr\}
^{\frac{k}{b_N}} 
\to 
\int_a^\infty r^{\alpha+3}e^{-(\alpha +2) r}dr <\infty, 
\end{align*}
as $N\to \infty$ for every $a \geq 0$. 
By the similar computation we obtain 
$J_1\to 0$ when $\alpha \leq -1$. 
For the estimate of $J_2$, we have 
\begin{align*}
J_2 \leq 
C P \Bigl( 
X_1^{(N)} \geq \frac{1}{2}\vare \sqrt{N\sigma_N^2} \Bigr) 
& \leq 
C \frac{2}{\vare \sqrt{N \sigma^2_N}} 
E[X_1^{(N)}] 
\leq 
\frac{C'}{\sqrt{N}}
\to 0, 
\end{align*}
as $N\to \infty$ where we used 
Markov's inequality and (\ref{varK}). 

Next, we consider $I_2$. 
By Taylor's theorem, there exists $\gamma_0>0$ such that 
for every $\theta \in \mathbb{R}$ which satisfies 
$|\frac{\theta}{\sqrt{N \sigma^2_N}}| \leq \gamma_0$, we have 
\begin{align*}
|E
\bigl[e^{i\theta \widetilde{X}_1^{(N)}}\bigr]| 
\leq 1-\frac{1}{4}
\bigl(\frac{\theta}{\sqrt{N \sigma^2_N}}\bigr)^2 
E\bigl[(X_1^{(N)}-b_N)^2\bigr] 
=1-\frac{\theta^2}{4N} \leq 
e^{-\frac{\theta^2}{4N}}. 
\end{align*}
Therefore, 
$|\psi_N(\theta)| \leq e^{-\frac{\theta^2}{4}}$ for every 
$\theta \in \mathbb{R}$ which satisfies 
$|{\theta}| \leq \gamma_0 {\sqrt{N \sigma^2_N}}$. 
Taking $\gamma$ in the definition of $I_2$ as $\gamma_0$, we 
obtain 
\begin{align*}
I_2 \leq 
\int_{R \leq |\theta| \leq \gamma_0 \sqrt{N\sigma_N^2}} 
e^{-\frac{\theta^2}{4}} d\theta 
\leq 2 
\int_{R}^\infty 
e^{-\frac{\theta^2}{4}} d\theta 
\to 0, 
\end{align*}
as $R\to \infty$. Similarly, we have 
$I_4\to 0$ as $R\to \infty$. 

The final task is the estimate of $I_3$. 
We take $\gamma$ in the definition of $I_3$ as 
$\gamma_0$ above. 
Let $0<\delta <1$ be fixed and define 
$\phi_N^\delta(\theta) := 
E\bigl[ (\delta e^{i\theta})^{X_1^{(N)}}\bigr] = 
\sum\limits_{k\geq 0} \delta^k e^{i\theta k} 
{{\nu}_{s_N^*}(k)}$. By the proof of 
\cite[Lemma 5.4]{LSV96}, we know that 
\begin{align*}
|\phi_N^\delta(\theta)| \leq 
\frac{1}{|1-\delta e^{i\theta}|}\Bigl\{
{\nu}_{s_N^*}(0)+ 
\sum\limits_{k\geq 0} 
|{{\nu}_{s_N^*}(k+1)}-{{\nu}_{s_N^*}(k)}|\Bigr\}.  
\end{align*}
We have 
${\nu}_{s_N^*}(0) = \frac{G(0)}{Q_0(s_N^*)} \to 0$ 
as $N\to \infty$ by (\ref{ZZ}) and (\ref{sK}). 
Also, 
\begin{align*}
|{{\nu}_{s_N^*}(k+1)}-{{\nu}_{s_N^*}(k)}| 
& = 
{\nu}_{s_N^*}(k) 
\bigm|\! \frac{{\nu}_{s_N^*}(k+1)}{{\nu}_{s_N^*}(k)}-1\!\bigm|  \\ 
& = 
{\nu}_{s_N^*}(k) 
\bigm|\! s_N^* \bigl\{\frac{g(k+1)}{G(k)}+1\bigr\}-1\!\bigm|\, 
\leq 
{\nu}_{s_N^*}(k) \Bigl\{ (1-s_N^*) + \frac{g(k+1)}{G(k)}\Bigr\}.  
\end{align*}
By the assumption on $g$, (\ref{GG}) and (\ref{sK}), 
for every $\vare>0$ there exists $N_0\geq 1$ and $C_1>0$ 
such that 
$(1-s_N^*) + \frac{g(k+1)}{G(k)} 
\leq \frac{C_1}{b_N}$ 
for every $N\geq N_0$ and $k\geq \vare b_N$. 
Moreover, 
$(1-s_N^*) + \frac{g(k+1)}{G(k)} 
\leq {C_2}$ 
for every $k\in \mathbb{Z}_+$ and $N\in \mathbb{N}$ 
where $C_2>0$ is a constant independent of $N$ and $k$. 
Therefore, 
\begin{align*}
\sum\limits_{k\geq 0} 
|{{\nu}_{s_N^*}(k+1)}-{{\nu}_{s_N^*}(k)}| 
\leq 
\sum\limits_{k=0}^{\vare b_N} C_2 {\nu}_{s_N^*}(k) 
+ 
\sum\limits_{k\geq \vare b_N} \frac{C_1}{b_N}{\nu}_{s_N^*}(k). 
\end{align*}  
The first term 
of the right-hand side converges to 
$C_3\int_0^\vare r^{\alpha+1}e^{-(\alpha +2) r}dr$ 
if $\alpha>-1$ and 
$C_3\int_0^\vare e^{- r}dr$ 
if $\alpha \leq -1$ 
for some $C_3>0$ by the similar computation as before. 
The second term is less than $\frac{C_1}{b_N}$ 
and this goes to $0$ as $N\to \infty$. 
As a result, for every $\vare> 0$ there exists $C_4(\vare)>0$ such that 
$C_4(\vare) \to 0$ as $\vare \to 0$ and 
\begin{align*}
|\phi_N^\delta(\theta)| \leq 
\frac{1}{|1-\delta e^{i\theta}|}\bigl\{ C_4(\vare) + \frac{1}{\pi} \gamma_0
\bigr\}, 
\end{align*}
for every $N$ large enough and every $\theta \in \mathbb{R}$. 
By taking the limit $\delta \uparrow 1$ and using the estimate 
$|1-e^{i\theta}| \geq \frac{2}{\pi}|\theta|$ for every 
$|\theta|\leq \pi$, we obtain 
\begin{align*}
|\phi_N\bigl( \frac{\theta}{\sqrt{N\sigma_N^2}}\bigr)| 
\leq 
\frac{\sqrt{N \sigma_N^2}}{|\theta|} \frac{\pi}{2} 
\bigl\{ C_4(\vare) + \frac{1}{\pi} \gamma_0\bigr\} 
\leq 
\frac{1}{\gamma_0} 
\frac{\pi}{2} 
\bigl\{ C_4(\vare) + \frac{1}{\pi} \gamma_0\bigr\}
= \frac{\pi}{2\gamma_0}C_4(\vare)+\frac{1}{2}, 
\end{align*}
for every $\theta \in \mathbb{R}$ so that 
$\gamma_0 \sqrt{N \sigma_N^2} \leq |\theta| \leq 
\pi \sqrt{N \sigma_N^2}$. 
Hence, by taking $\vare >0$ small enough, 
there exists $r<1$ such that 
$|\phi_N\bigl( \frac{\theta}{\sqrt{N\sigma_N^2}}\bigr)| 
\leq r $ 
for every $N$ large enough and 
$\theta \in \mathbb{R}$ so that 
$\gamma_0 \sqrt{N \sigma_N^2} \leq |\theta| \leq 
\pi \sqrt{N \sigma_N^2}$. This yields that 
\begin{align*}
I_3 = 
\int_{\gamma_0 \sqrt{N\sigma_N^2} \leq 
|\theta| \leq \pi \sqrt{N\sigma_N^2}} 
|\psi_N(\theta) | d\theta 
& = 
\int_{\gamma_0 \sqrt{N\sigma_N^2} \leq 
|\theta| \leq \pi \sqrt{N\sigma_N^2}} 
|\phi_N(\frac{\theta}{\sqrt{N \sigma_N^2}})|^N d\theta \\
& \leq 2\pi \sqrt{N \sigma_N^2} r^N \to 0, 
\end{align*}
as $N\to \infty$ and we can complete the proof of Theorem 
\ref{lclt}. 

\qed

\smallskip

\noindent
{\em Proof of Corollary \ref{cor1}.}\ 
By Proposition \ref{prop1}, we have 
\begin{align*}
\lim\limits_{n\to \infty} 
E_{\eta} \Bigl[ \frac{1}{N}
\bigm|\!\!\bigl\{x\in \Lambda_N; 
\frac{1}{a_N} {X_n^{(N)} (x)} \in (b, c) \bigr\}
\!\!\bigm|\Bigr] 
& = 
\lim\limits_{n\to \infty} 
\frac{1}{N} \sum\limits_{x\in \Lambda_N}
E_{\eta} \Bigl[ 
I \bigl( \frac{1}{a_N} X_n^{(N)} (x) 
\in (b, c)\bigr) \Bigr] \\
& = 
\frac{1}{N} \sum\limits_{x\in \Lambda_N}
\mu_{N, L_N}
\bigl( \frac{1}{a_N} \xi(x) \in (b, c)\bigr)\\
& = 
\mu_{N, L_N}
\bigl( \frac{1}{a_N} \xi(1) \in (b, c)\bigr). 
\end{align*}
Therefore, it is sufficient to show that 
$\lim\limits_{N\to \infty} 
\mu_{N, L_N}
\bigl( \frac{1}{a_N} \xi(1) \in (b, c)\bigr) 
= 
\mu_{\alpha, T} ((b, c))$. 
This follows from 
Theorem \ref{thm1} and 
the basic facts about the weak convergence of probability measures. 
The same is true for $\{Y_n^{(N)}\}_{n\geq 0}$ 
and $\{Z_n^{(N)}\}_{n\geq 0}$. 

\qed 

\section*{Acknowledgement}
The author thanks Mai Aihara 
for valuable discussions and 
her help on numerical simulations. 
This work was partially supported 
by JSPS KAKENHI Grant Number 22K03359. 


\end{document}